\newtheorem{remark}[theorem]{Remark}
\newcommand{\bI}{\pmb{I}}
\newcommand{\bv}{\pmb{v}}
\newcommand{\be}{\pmb{e}}
\newcommand{\bw}{\pmb{w}}
\newcommand{\bA}{\pmb{A}}
\newcommand{\bF}{\pmb{F}}
\newcommand{\bM}{\pmb{M}}
\newcommand{\bu}{\pmb{u}}
\newcommand{\uubu}{\underline{\underline{\pmb{u}}}}
\newcommand{\mR}{\mathbb{R}}
\definecolor{OliveGreen}{rgb}{0,0.6,0}
\title{Overlapping Localized Exponential Time Differencing Methods for Diffusion Problems\footnotemark[1]}
\author{Thi-Thao-Phuong Hoang\footnotemark[2]
\and Lili Ju\footnotemark[2]\ 
\and Zhu Wang\footnotemark[2]}
\begin{document}

\maketitle

\renewcommand{\thefootnote}{\fnsymbol{footnote}}
\footnotetext[1]{This work is partially supported by US Department of Energy under grant number DE-SC0016540 and US National Science Foundation under grant number  DMS-1521965.}
\footnotetext[2]{Department of Mathematics and Interdisciplinary Mathematics Institute, University of South Carolina, Columbia, SC 29208, USA. 
Email: \texttt{hoang5@mailbox.sc.edu}, \texttt{ju@math.sc.edu}, \texttt{wangzhu@math.sc.edu}.}
\renewcommand{\thefootnote}{\arabic{footnote}}

\maketitle

\begin{abstract}
The paper is concerned with overlapping domain decomposition and exponential time differencing for the diffusion equation discretized in space by cell-centered finite differences. Two localized  exponential time differencing methods are proposed to solve the fully discrete problem: the first method is based on Schwarz iteration applied at each time step and involves solving stationary problems in the subdomains at each iteration, while the second method is based on the Schwarz waveform relaxation algorithm in which time-dependent subdomain problems are solved at each iteration. The convergence of the associated iterative solutions to the corresponding 
fully discrete multidomain solution and to the exact semi-discrete solution is rigorously proved.  Numerical experiments are carried out to confirm theoretical results and to compare the performance of the two methods. 
\end{abstract} 
\begin{keywords} Exponential time differencing, overlapping domain decomposition, diffusion equation, localization, parallel Schwarz iteration, waveform relaxation
\end{keywords}

\begin{AMS}
65F60, 65M55, 65M12, 65L06
\end{AMS}

\pagestyle{myheadings}

\section{Introduction}
Exponential time differencing (ETD) methods are numerical methods for the time integration of systems of evolutionary partial differential equations based on exponential integrators and the variation-of-constants formula. The methods have been studied by many researchers for various classes of problems, for instance, see \cite{hochbruck98, Cox2002, Du2005, Krogstad2005, Nie2006, Tokman2006, Nie2008, hockbruck2009} and the references therein. A sound review in this direction and additional references are given in \cite{hochbruckActa}. Except for preservation of the system's exponential behavior in the discrete sense, one of the most important properties of these methods is that large time steps can be used for stiff problems without affecting stability of the solution, while explicit methods often require tiny time step sizes, which is often very expensive in terms of computational cost. 

Due to the development of supercomputers and parallel computing technologies, numerical methods based on domain decomposition (DD) have attracted great attention from many researchers in the past decades (see \cite{QV99, Toselli:DDM:2005, Mathew:DDM:2008, Dolean15} and the proceedings of annual conferences on DD methods). The main idea is to decompose the domain of calculation into (overlapping or non-overlapping) subdomains with smaller sizes and then solve the subdomain problems in parallel with some transmission conditions enforced on the interfaces between the subdomains. In this work, we consider parallel Schwarz type DD with overlapping subdomains. {The parallel Schwarz algorithm} and its sequential version, namely the alternating Schwarz algorithm, were first proposed by Lions \cite{LionsI, LionsII} for stationary problems and can be extended to evolution problems {straightfowardly by first applying} time discretization to the problem and then {performing} Schwarz iteration at each time step level (consequently, the same time step size is used on the whole domain). A discrete version of the parallel Schwarz algorithm is called the additive Schwarz algorithm, which has been studied for parabolic problems in \cite{Kuznetsov90, Cai1991}. It is well known that the convergence of this type of algorithm is linear and directly dependent on the overlap sizes. Based on the idea of waveform relaxation, a new class of DD methods for parabolic problems, namely the space-time DD or overlapping Schwarz waveform relaxation method, has been introduced and studied in \cite{GanderStuart98, Gander99, GanderZhao02, Giladi2002}. Unlike the traditional approach, one decomposes the domain in both space and time and solves time-dependent problems in each subdomain at each iteration.  This approach, also called the  ``global-in-time'' method, enables the use of different time steps in different subdomains, which can be very important in some applications where the time scales in various subdomains are very different. Moreover, for short time intervals, it is shown that the algorithm converges at a super-linear rate. Hence, one could take advantage of this property by using time windows for long-term computations. 

Recently, some fast ETD algorithms, which are based on compact representation of the spatial operators and the use of linear splitting techniques to achieve further numerical stabilization, have been successfully applied to numerical simulation of grain coarsening phenomena in material science in  \cite{Ju2015-I,Ju2015-II}. A localized compact ETD algorithm based overlapping DD (i.e., perform the ETD locally in each subdomain in parallel and then pass the data of overlapping regions to the respective neighboring subdomains for time stepping) was first used in \cite{ZhangEtAl2016} for extreme-scale phase field simulations of three-dimensional coarsening dynamics in the supercomputer, and the results showed excellent parallel scalability of the method. 
{Note that the parallelism of this approach is  domain-based, which is completely different from the parallel adaptive-Krylov exponential solver proposed in \cite{Loffeld2014implementation}.}
As far as we know, neither convergence analysis nor error estimate has been theoretically studied for DD-based localized ETD methods.   In addition, it is noteworthy that unlike most existing numerical DD methods for time-dependent problems, the multidomain localized ETD problem is not algebraically equivalent to the corresponding monodomain ETD problem.
In this paper, we study localized ETD methods with parallel Schwarz algorithms for the diffusion equation discretized in space by the cell-centered finite difference method. Using either first order ETD (ETD1) or second order ETD (ETD2) approximations, a fully discrete multidomain problem is formulated whose solution is proved to converge to the exact semi-discrete (in space) solution. 
In order to solve such a multidomain problem in practice, we propose two iterative DD methods: the first method is based on Schwarz iteration applied at each time step and involves solving stationary problems in the subdomains at each iteration, while the second method is based on the Schwarz waveform relaxation algorithm in which time-dependent problems are solved in the subdomains at each iteration. We then derive a rigorous analysis indicating that the iterative solutions converge to the discrete multidomain solution at the same linear rate as the parallel Schwarz algorithm. The analysis is for one-dimensional problems and mainly based on the maximum principle. Note that explicit representations of convergence rates can only be determined for such a low dimensional case. By using the techniques in \cite{GanderZhao02} (see Remark~\ref{rmk:higherdimensions}), similar convergence results can be obtained for higher dimensional problems. 

The rest of the paper is organized as follows: in Section~\ref{sec:cont.prob}, the model problem and the parallel Schwarz method for a decomposition into two overlapping subdomains are introduced. For completeness, we recall linear and super-linear convergence results of the Schwarz waveform relaxation methods presented in \cite{GanderStuart98, Gander99, GanderZhao02, Giladi2002}. In Section~\ref{sec:letds}, we first derive fully discrete  multidomain problems using the cell-centered finite difference approximations in space and the localized ETD approximations in time,  then present formulations of different DD-based Schwarz iterative algorithms for {solving} the multidomain problem. Convergence analysis is given in Section~\ref{sec:Convergence} to show that the iterative solutions converge to the multidomain localized ETD solutions and further converge to the exact semi-discrete solution along the time step size refinement.  Numerical experiments in 1D and 2D are carried out to investigate convergence behavior of {the proposed}  algorithms and to compare their performance in Section~\ref{sec:NumRe}. Some conclusions are finally {drawn}  in Section~\ref{cons}.

\section{The model problem and parallel Schwarz waveform relaxation method} \label{sec:cont.prob}

Consider the following  time-dependent one-dimensional (in space) diffusion equation with Dirichlet boundary conditions: 
\begin{equation}\label{model}
\left \{ \begin{array}{ll} 
\frac{\partial u}{\partial t} = \nu\frac{\partial^{2} u}{\partial x^{2}} + f(x,t), &\quad 0<x<L, \; 0<t\leq T, \vspace{4pt}\\
u(0,t) =\psi_{1}(t), \quad u(L,t) =\psi_{2}(t), &\quad 0<t\leq T,\vspace{3pt}\\
u(x,0)=u_{0}(x), &\quad 0 \leq x \leq L,
\end{array} \right .
\end{equation}
where $\nu$ is a positive constant diffusion coefficient. Assume the data is sufficiently smooth so that there exists a classical solution $u \in C^{1}(0,T; C^{2}(\Omega))$.   

Let us decompose the domain $\Omega=[0,L] \times [0,T]$ into two overlapping subdomains: $\Omega_{1}=[0,\beta L] \times [0,T]$ and $\Omega_{2}=[\alpha L, L] \times [0,T]$ with $0<\alpha<\beta <1$. Extensions to many more subdomains can be done straightforwardly (see \cite{GanderZhao02} and Section~\ref{sec:NumRe}). 
\begin{figure}[!htbp]
\vspace{-0.3cm}
\begin{center}
\includegraphics[scale=0.6]{./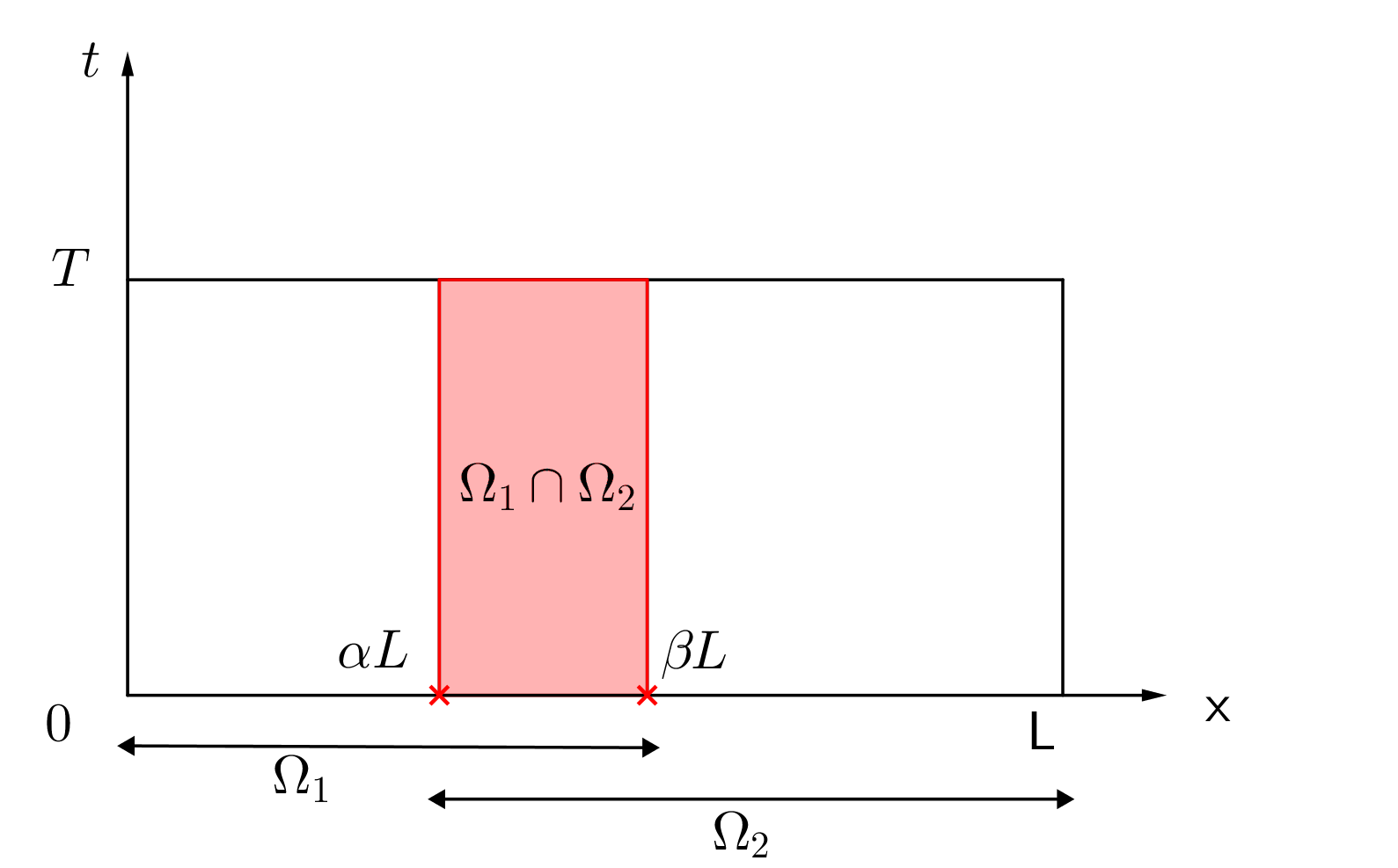} 
\end{center} 
\caption{A decomposition into two overlapping subdomains.} 
\end{figure}

\textbf{A multidomain problem} equivalent to \eqref{model} consists of solving in the subdomains the following problems:
\begin{equation}\label{modelsub1}
\left \{ \begin{array}{ll} 
\frac{\partial u_{1}}{\partial t} =  \nu\frac{\partial^{2} u_{1}}{\partial x^{2}} + f(x,t), &\quad 0<x<\beta L, \; 0<t\leq T, \vspace{4pt}\\
u_{1}(0,t) =\psi_{1}(t), &\quad 0<t\leq T,\\
u_{1}(x,0)=u_{0}(x), &\quad 0 \leq x \leq \beta L,
\end{array} \right . 
\end{equation}
and 
\begin{equation}\label{modelsub2}
\left \{ \begin{array}{ll} 
\frac{\partial u_{2}}{\partial t} =  \nu\frac{\partial^{2} u_{2}}{\partial x^{2}} + f(x,t), &\quad \alpha L <x< L, \; 0<t\leq T, \vspace{4pt}\\
u_{2}(L,t) =\psi_{2}(t), &\quad 0<t\leq T,\\
u_{2}(x,0)=u_{0}(x), &\quad \alpha L \leq x \leq L, 
\end{array} \right . 
\end{equation}
together with the transmission conditions on the interfaces of the subdomains:
\begin{equation} \label{modelTCs}
\left \{ \begin{array}{ll} u_{1}(\alpha L, t)  =u_{2} (\alpha L,t), \\
 u_{1}(\beta L, t)  =u_{2} (\beta L,t), \\
\end{array} \right . \quad 0 <t \leq T.
\end{equation}
This multidomain problem can be solved iteratively using a Schwarz-type iteration as in the elliptic case \cite{LionsI}, namely the parallel Schwarz waveform relaxation algorithm, which involves at each iteration $k=1,2, \hdots, $ the solution of 
\begin{equation}\label{Schwarz1}
\left \{ \begin{array}{ll} 
\frac{\partial u_{1}^{(k+1)}}{\partial t} =  \nu\frac{\partial^{2} u_{1}^{(k+1)}}{\partial x^{2}} + f(x,t), &\quad 0<x<\beta L, \; 0<t\leq T, \vspace{4pt}\\
u_{1}^{(k+1)}(0,t) =\psi_{1}(t), &\quad 0<t\leq T,\\
u_{1}^{(k+1)}(x,0)=u_{0}(x), &\quad 0 \leq x \leq \beta L, \vspace{4pt}\\
u_{1}^{(k+1)}(\beta L, t)  =u_{2}^{(k)} (\beta L,t), &\quad 0<t\leq T,
\end{array} \right .  \vspace{-0.2cm}
\end{equation}
and \vspace{-0.2cm}
\begin{equation}\label{Schwarz2}
\left \{ \begin{array}{ll} \frac{\partial u_{2}^{(k+1)}}{\partial t} =  \nu\frac{\partial^{2} u_{2}^{(k+1)}}{\partial x^{2}} + f(x,t), &\quad \alpha L <x< L, \; 0<t\leq T, \vspace{4pt}\\
u_{2}^{(k+1)}(L,t) =\psi_{2}(t), &\quad 0<t\leq T,\\
u_{2}^{(k+1)}(x,0)=u_{0}(x), &\quad \alpha L \leq x \leq L,  \vspace{4pt}\\
u_{2}^{(k+1)}(\alpha L, t)  =u_{1}^{(k)} (\alpha L,t), &\quad 0<t\leq T,
\end{array} \right . 
\end{equation}
where $u_{1}^{(0)} (\alpha L,t)$ and $u_{2}^{(0)} (\beta L,t)$ are given initial guess. The convergence of the Schwarz algorithm \eqref{Schwarz1}-\eqref{Schwarz2} is guaranteed by the following theorem \cite{GanderStuart98}. \vspace{5pt}

\begin{theorem} \label{thrm:conv.cont}
The Schwarz iteration \eqref{Schwarz1}-\eqref{Schwarz2} converges in $L^{\infty}([0,T], L^{\infty}([0,L]))$ to the solution $(u_{1}, u_{2})$ of \eqref{modelsub1}-\eqref{modelTCs} at a linear rate: 
\begin{align*}
\| u_{1}^{(2k+1)} - u_{1} \|_{\infty, T} \; &\leq  \left (\kappa(\alpha, \beta) \right )^{k}\;  \vert u_{2}^{(0)}(\beta L, \cdot) -u_{2}(\beta L, \cdot) \vert_{T},\vspace{4pt} \\
\| u_{2}^{(2k+1)} - u_{2}  \|_{\infty, T} \; &\leq  \left (\kappa(\alpha, \beta)\right )^{k} \; \vert u_{1}^{(0)}(\alpha L,\cdot)-u_{1}(\alpha L, \cdot) \vert_{T}, 
\end{align*}
where $\vert u \vert_{T} = \max_{0\leq t \leq T} \vert u(x,t) \vert$,  $\| u\|_{\infty, T} = \max_{x\in \Omega, 0\leq t \leq T} \vert u(x,t)\vert$ and 
$$0<\kappa(\alpha, \beta):=\frac{\alpha(1-\beta)}{\beta (1-\alpha)}<1.$$
\end{theorem}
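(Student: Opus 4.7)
The plan is to reduce the theorem to a maximum principle comparison on each subdomain. Define the iteration errors $e_i^{(k)}:=u_i^{(k)}-u_i$ for $i=1,2$. By linearity of \eqref{Schwarz1}-\eqref{Schwarz2} and of \eqref{modelsub1}-\eqref{modelsub2}, each $e_i^{(k+1)}$ solves the homogeneous heat equation $\partial_t e - \nu\partial_{xx} e = 0$ on its subdomain, with zero initial data, zero Dirichlet condition on the physical boundary ($x=0$ for $e_1$, $x=L$ for $e_2$), and interface Dirichlet data $e_1^{(k+1)}(\beta L,t)=e_2^{(k)}(\beta L,t)$ and $e_2^{(k+1)}(\alpha L,t)=e_1^{(k)}(\alpha L,t)$.

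Next I would construct a stationary linear supersolution on each subdomain. For $e_1^{(k+1)}$, let $M_k:=|e_2^{(k)}(\beta L,\cdot)|_T$ and set $w_1(x):=M_k\,x/(\beta L)$; then $\partial_t w_1-\nu\partial_{xx}w_1=0$, and on the parabolic boundary one checks $w_1(0)=0$, $w_1(\beta L)=M_k\ge |e_2^{(k)}(\beta L,t)|$, $w_1(x)\ge 0$ at $t=0$. Applying the parabolic maximum principle (comparison) to $\pm e_1^{(k+1)}$ and $w_1$ yields
\begin{equation*}
|e_1^{(k+1)}(x,t)|\le \frac{x}{\beta L}\,|e_2^{(k)}(\beta L,\cdot)|_T,\qquad 0\le x\le \beta L,\; 0\le t\le T.
\end{equation*}
Symmetrically, using the supersolution $w_2(x):=|e_1^{(k)}(\alpha L,\cdot)|_T\,(L-x)/(L-\alpha L)$ on $\Omega_2$,
\begin{equation*}
|e_2^{(k+1)}(x,t)|\le \frac{L-x}{L-\alpha L}\,|e_1^{(k)}(\alpha L,\cdot)|_T,\qquad \alpha L\le x\le L,\;0\le t\le T.
\end{equation*}

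Now I would evaluate these pointwise bounds at the opposite interfaces. Taking $x=\alpha L$ in the first and $x=\beta L$ in the second gives
\begin{equation*}
|e_1^{(k+1)}(\alpha L,\cdot)|_T\le \tfrac{\alpha}{\beta}\,|e_2^{(k)}(\beta L,\cdot)|_T,\qquad |e_2^{(k+1)}(\beta L,\cdot)|_T\le \tfrac{1-\beta}{1-\alpha}\,|e_1^{(k)}(\alpha L,\cdot)|_T.
\end{equation*}
Composing the two estimates over two successive iterations contracts each interface trace by exactly the factor $\kappa(\alpha,\beta)=\alpha(1-\beta)/(\beta(1-\alpha))$, which is strictly less than $1$ because $\alpha<\beta$. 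Iterating $k$ times,
\begin{equation*}
|e_2^{(2k+1)}(\beta L,\cdot)|_T\le \kappa^{k}\,|e_2^{(0)}(\beta L,\cdot)|_T,\qquad |e_1^{(2k+1)}(\alpha L,\cdot)|_T\le \kappa^{k}\,|e_1^{(0)}(\alpha L,\cdot)|_T.
\end{equation*}
Feeding these back into the subdomain supersolution bounds (which are bounded by $1$ in $x/(\beta L)$ and $(L-x)/(L-\alpha L)$, respectively) yields the stated $\|\cdot\|_{\infty,T}$ estimates.

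The only delicate step is the application of the parabolic comparison principle on a bounded space-time cylinder with a nonsmooth, merely continuous Dirichlet trace on the interface; one must verify that the standard weak maximum principle still applies under the assumed regularity (which follows from Theorem assumptions on the data and from $e_i^{(k+1)}$ being the unique classical solution to a heat initial-boundary value problem with continuous data vanishing at the corners). Once this is in hand, every remaining step is a direct calculation with the linear barrier functions.
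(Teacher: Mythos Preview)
Your proposal is correct and follows essentially the same maximum-principle-with-linear-barrier argument that the paper (which cites this result from \cite{GanderStuart98} without giving a proof) employs for its discrete analogue in Lemma~\ref{lmm:estimate} and Theorem~\ref{thrm:M3.ETD1}. The only slip is a harmless off-by-one in the interface-trace recursion: the contraction should read $|e_2^{(2k)}(\beta L,\cdot)|_T\le\kappa^k\,|e_2^{(0)}(\beta L,\cdot)|_T$ (even index on the left), which then feeds into the supersolution bound to yield exactly $\|e_1^{(2k+1)}\|_{\infty,T}\le\kappa^k\,|e_2^{(0)}(\beta L,\cdot)|_T$ as stated.
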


The convergence rate is similar to that of the stationary case~\cite{LionsI} and depends on the  size of the overlap between the two subdomains.  Moreover,  for short time intervals, the convergence rate could be super-linear (see \cite{Gander99, GanderZhao02, GiladiReport}): 
\begin{theorem} \label{thrm:superlinear}
For bounded time intervals, the sequence of iterates $(u_{1}^{(k)}, u_{2}^{(k)})$ in \eqref{Schwarz1}-\eqref{Schwarz2} converges super-linearly: \vspace{-0.2cm}
\begin{align*}
&\max_{i=1,2} \| u_{i}^{(k)} - u_{i}\|_{\infty, t} \hspace{0.2cm}\\
 &\quad \leq \; {\mathsf{erfc}} \left (\frac{k(\beta-\alpha)}{2\sqrt{\nu t}}\right ) \max \bigg \{\vert u_{1}^{(0)}(\alpha L,\cdot)-u_{1}(\alpha L, \cdot) \vert_{t}, \vert u_{2}^{(0)}(\beta L, \cdot) -u_{2}(\beta L, \cdot) \vert_{t} \bigg \} ,
\end{align*}
for any $0\leq t \leq T < \infty$. 
\end{theorem}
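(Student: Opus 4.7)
\medskip

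\noindent\textbf{Proof plan.} The plan is to reduce the statement to an estimate on the interface traces of the errors and then iterate a sharp heat-kernel bound. Write the errors $e_i^{(k)} := u_i - u_i^{(k)}$ on $\Omega_i$. Subtracting \eqref{Schwarz1}--\eqref{Schwarz2} from \eqref{modelsub1}--\eqref{modelsub2} yields homogeneous heat problems with zero source, zero initial condition, zero Dirichlet data at the exterior boundary, and interface data $e_1^{(k+1)}(\beta L,t)=e_2^{(k)}(\beta L,t)$, $e_2^{(k+1)}(\alpha L,t)=e_1^{(k)}(\alpha L,t)$. By the parabolic maximum principle, it suffices to control the interface traces $|e_1^{(k)}(\alpha L,\cdot)|_t$ and $|e_2^{(k)}(\beta L,\cdot)|_t$, because $\|e_i^{(k)}\|_{\infty,t}$ is bounded by the sup of the boundary data of $e_i^{(k)}$ on $[0,t]$.

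Next, I would derive an explicit representation of the trace $e_1^{(k+1)}(\alpha L,t)$ in terms of $e_2^{(k)}(\beta L,\cdot)$. The clean way is to dominate $e_1^{(k+1)}$ on $(0,\beta L)$ by the solution $w$ of the heat equation on the half-line $(-\infty,\beta L)$ with vanishing initial condition and Dirichlet data $|e_2^{(k)}(\beta L,\cdot)|_t$ (a constant in time) at $x=\beta L$; by the maximum principle this extension is non-negative and dominates $|e_1^{(k+1)}|$ on the overlap. For such a half-line problem the solution is the classical erfc profile
\begin{equation*}
w(x,t) \;=\; |e_2^{(k)}(\beta L,\cdot)|_t \,\operatorname{\mathsf{erfc}}\!\left(\frac{\beta L - x}{2\sqrt{\nu t}}\right),
\end{equation*}
so evaluated at $x=\alpha L$ one obtains $|e_1^{(k+1)}(\alpha L,\cdot)|_t \le \operatorname{\mathsf{erfc}}\!\bigl(\frac{\beta-\alpha}{2\sqrt{\nu t}}L/\,\cdots\bigr)\,|e_2^{(k)}(\beta L,\cdot)|_t$. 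An analogous bound holds on $\Omega_2$.

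The delicate step, and the main obstacle, is to obtain the factor $k$ inside $\operatorname{\mathsf{erfc}}$ rather than an exponent $k$ outside. The naive composition of the trace bound $k$ times produces $(\operatorname{\mathsf{erfc}}(\cdot))^{k}$, which is weaker. The correct argument keeps the kernel representation: if $e_1^{(k+1)}(\alpha L,t)=\int_0^t K_{\beta-\alpha}(t-s)\,e_2^{(k)}(\beta L,s)\,ds$ with the explicit heat kernel $K_d$ between two points at distance $d$, then iterating the representation from $e_1^{(k+1)}$ to $e_1^{(k-1)}$ produces a $k$-fold time convolution of translated kernels. One then uses the semigroup/convolution identity for heat kernels on the half-line (or, equivalently, a Laplace-transform computation that turns convolutions into products of $e^{-d\sqrt{p/\nu}}$) to collapse the $k$-fold convolution into a single kernel whose time-integrated mass equals $\operatorname{\mathsf{erfc}}\!\bigl(\tfrac{k(\beta-\alpha)}{2\sqrt{\nu t}}\bigr)$. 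Bounding the convolution by this mass times $\max\{|e_1^{(0)}(\alpha L,\cdot)|_t,|e_2^{(0)}(\beta L,\cdot)|_t\}$ yields the claimed super-linear estimate.

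Finally I would transfer the interface bound back to the full $L^\infty$ norm via the maximum principle applied to $e_i^{(k)}$ on $\Omega_i$ (the only nonzero boundary datum is the interface trace, already estimated), which gives $\max_{i=1,2}\|e_i^{(k)}\|_{\infty,t}$ majorized by the same $\operatorname{\mathsf{erfc}}$ factor and completes the proof. The two expected technical points are justifying the half-line domination (requires only non-negativity arguments and the sign of interface data after taking absolute values) and the semigroup composition of the Dirichlet heat kernels, which is most transparently done on the Laplace side.
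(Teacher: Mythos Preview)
The paper does not actually prove Theorem~\ref{thrm:superlinear}; it is recalled as a known result, with citations to \cite{Gander99, GanderZhao02, GiladiReport}, so there is no in-paper argument to compare against. What you have sketched is, in fact, the standard proof found in those references: pass to the homogeneous error equations, dominate each subdomain solution by the half-line heat problem with constant boundary datum (whose explicit solution is the $\mathsf{erfc}$ profile), and---the key step you correctly isolate---iterate at the level of the kernel representation rather than at the level of the sup-norm bound. On the Laplace side the Dirichlet-to-interface-trace map on a half-line contributes a factor $e^{-d\sqrt{s/\nu}}$, where $d$ is the overlap width; composing $k$ Schwarz steps multiplies these factors, giving $e^{-kd\sqrt{s/\nu}}$, whose inverse Laplace transform integrated in time produces $\mathsf{erfc}\!\left(\frac{kd}{2\sqrt{\nu t}}\right)$. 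This is precisely the mechanism in Giladi--Keller and Gander--Zhao, so your plan is correct and essentially coincides with the literature proof.

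One minor remark: the physical overlap width is $(\beta-\alpha)L$, so the $\mathsf{erfc}$ argument should carry a factor $L$. Your intermediate formula for $w(x,t)$ has this right (with $\beta L - x$ in the numerator), and the statement in the paper appears to have dropped the $L$ (or silently normalized $L=1$); just be consistent when you write up the final bound.
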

Here ${\mathsf{erfc}}(x)$ is the complementary error function satisfying
$$ \lim_{x \rightarrow \infty}{\mathsf{erfc}}(x) = \lim_{x \rightarrow \infty} \frac{2}{\sqrt{\pi}} \int_{x}^{\infty} \text{e}^{-t^{2}}\, dt = 0. 
$$
Thus the smaller the time $t$, the faster the convergence. 

\section{Localized ETD algorithms based on overlapping domain decomposition} \label{sec:letds}
Let us consider a discretization in space using the cell-centered finite difference scheme with a uniform grid of size $h = \sfrac{L}{(N+1)}$. We then obtain the following linear system of ODEs for the discrete monodomain problem \eqref{model}:
\begin{equation} \label{ODEs}
\left \{ \begin{array}{l} \frac {\partial \bu }{\partial t}  = \bA_{(N)} \bu (t) + {\bF(t)}, \quad 0< t< T, \vspace{4pt}\\
\bu (0)  = \bu_{0},
\end{array} \right.
\end{equation}
where $\bu(t)=(u_{1},u_{2},\cdots,u_{N})^{\top}$,
\begin{align*}
 \bA_{(N)} = \frac{\nu}{h^2} \left [ \begin{array}{rrrrr} -2 & 1 & 0&\cdots & 0 \\
1 &-2 & 1&\cdots &  0\\
0&1&-2&\ddots&\vdots\\
 \vdots&\vdots & \ddots & \ddots & 1 \\
0 & \cdots &0  & 1&-2
\end{array} \right ], \quad
\bF (t) = \left (  \begin{array}{c} f(h, t) + \frac{\nu}{h^2}\psi_{1}(t) \vspace{3pt}\\
f(2h,t) \\
\vdots \\
f((N-1)h,t)\\ 
f(Nh,t) + \frac{\nu}{h^2}\psi_{2}(t)
\end{array} \right ). \vspace{-0.2cm}
\end{align*}
{Unless otherwise specified, we write $\bF(t):=\bF(f(t), \psi_{1}(t), \psi_{2}(t))$ for the sake of simplicity}. 

\subsection{Monodomain ETD schemes} 
For the time discretization, consider a partition of the time interval $[0,T]$: $0= t_{0} < t_{1} < \hdots < t_{M} = T, \; \text{with a step size} \; \Delta t~=~\sfrac{T}{M}.$
The exact (in time) solution to \eqref{ODEs} at each time level is given by the variation-of-constants formula: \vspace{-0.2cm}
\begin{equation*} \label{MonoETD}
\bu (t_{m+1}) = \text{e}^{\Delta t \bA} \bu (t_{m}) + \int_{0}^{\Delta t} \text{e}^{(\Delta t-s) \bA} \bF \left (t_{m}+s\right ) \, ds,
\end{equation*}
for $m=0,1, \hdots, M-1$. \vspace{4pt}

The first-order (monodomain) ETD scheme (also known as the exponential Euler method) based on \eqref{MonoETD} for solving the model problem \eqref{model}, denoted by ETD1, is obtained by assuming that $\bF(t)$ is constant over $(t_{m}, t_{m+1}]$: \vspace{-0.2cm}
\begin{equation} \label{IETD1Mono}
\begin{array}{rcl}
\bu_{m+1} &=& \text{e}^{\Delta t \bA} \bu_{m} + \int_{0}^{\Delta t} \text{e}^{(\Delta t-s) \bA}  \bF\left (t_{m+1} \right ) \, ds \vspace{4pt}\\
&=& \text{e}^{\Delta t \bA} \bu_{m} + \bA^{-1} \left (\text{e}^{\Delta t \bA } - \bI \right ) \bF\left (t_{m+1}\right ). 
\end{array}
\end{equation}

The second-order (monodomain) ETD scheme, ETD2, is obtained by approximating $\bF(t)$ on each time interval $[t_{m}, t_{m+1}]$ by its linear interpolation polynomial: \vspace{-0.2cm}
\begin{equation} \label{ETD2Mono}
\begin{array}{rcl}
\bu_{m+1} &= &\text{e}^{\Delta t \bA} \bu_{m} + \int_{0}^{\Delta t} \text{e}^{(\Delta t-s) \bA}  \left [ \frac{\bF\left (t_{m+1} \right ) - \bF\left (t_{m} \right )}{\Delta t} s + \bF\left (t_{m} \right ) \right ] \, ds \vspace{4pt}\\
&= & \text{e}^{\Delta t \bA} \bu_{m}  + \bA^{-1} \left (\text{e}^{\Delta t \bA} - \bI \right ) \bF\left (t_{m}\right ) \vspace{3pt}\\
&& \hspace{1.3cm}+ (\Delta t)^{-1} \bA^{-2} \left (\text{e}^{\Delta t \bA} - \bI - \Delta t \bA\right ) \left (\bF(t_{m+1}) - \bF(t_{m})\right ). 
\end{array} 
\end{equation}
For higher order exponential quadrature (for linear problems) and exponential Runge-Kutta (for semilinear problems) as well as the exponential multistep methods, we refer to \cite{hochbruckActa, Cox2002, Tokman2006} and the references therein. In this paper, we shall use either the ETD1~\eqref{IETD1Mono} or the ETD2~\eqref{ETD2Mono}.\vspace{3pt}

\subsection{Semi-discrete multidomain problem and fully discrete solutions by localized ETDs} \label{sec:discrete.prob}

For the overlapping domain decomposition approach, assume that $\alpha L = N_{\alpha} h$ and $\beta L =N_{\beta} h$ for some integers $1<N_{\alpha}<N_{\beta}<N$.
Set $N_{1} := N_{\beta}-1$, $N_{2}:=N-N_{\alpha}$ and $N_{\beta,\alpha} :=N_{\beta} - N_{\alpha}$. The semi-discrete multidomain problem corresponding to the continuous problem~\eqref{modelsub1}-\eqref{modelTCs} consists of solving the following problems:
\begin{equation} \label{semi1}
\left \{ \begin{array}{ll} \frac {\partial \bu_{1} }{\partial t}  = \bA_{1} \bu_{1} (t) + \bF_{1}(f(t), \psi_{1}(t), \bu_{2}(N_{\beta,\alpha},t)), \quad 0< t\leq T, \vspace{4pt}\\
\bu_{1} (j,0)  = \bu_{0}(j), \quad 1\leq j \leq N_{1},
\end{array} \right .  \vspace{-0.2cm}
\end{equation}
and \vspace{-0.1cm}
\begin{equation} \label{semi2}
\left \{ \begin{array}{ll} \frac {\partial \bu_{2} }{\partial t}  = \bA_{2} \bu_{2} (t) + \bF_{2}(f(t), \bu_{1}(N_{\alpha},t), \psi_{2}(t)), \quad 0< t\leq  T, \phantom{abcd}\vspace{4pt}\\
\bu_{2} (j,0)  = \bu_{0}(j+N_{\alpha}), \quad 1\leq j \leq N_{2}, 
\end{array} \right .\vspace{-0.2cm}
\end{equation}
where $\bA_{1}:= {\bA_{(N_{1})} }$, $\bA_{2}:={\bA_{(N_{2})}}$ and
\begin{align*}
&\bF_{1}(f(t), \psi_{1}(t),\bu_{2}(N_{\beta,\alpha},t))= \\
& \hspace{1cm} \left ( f(h, t) +\frac{\nu}{h^2}\psi_{1}(t), \; f(2h, t), \hdots,  f({({N_\beta}-1)}h, t)+\frac{\nu}{h^2} \bu_{2}(N_{\beta,\alpha},t)\right )^{\top}, \vspace{6pt}\\
&\bF_{2}(f(t), \bu_{1}(N_{\alpha},t), \psi_{2}(t)) = \\
& \hspace{1cm} \left ( f((N_{\alpha}+1)h, t) +\frac{\nu}{h^2} \bu_{1}(N_{\alpha},t), \; f((N_{\alpha}+2)h, t), \hdots,  f(Nh, t)+\frac{\nu}{h^2} \psi_{2}(t) \right)^{\top}. 
\end{align*}

As in the monodomain problem, ETD time-stepping methods are applied. One can use ETD1 to obtain a fully discrete multidomain solution for \eqref{semi1}-\eqref{semi2} by solving the following coupled local equations defined in $\Omega_{1}$ and $\Omega_{2}$ respectively: 
\begin{equation} \label{ETD1Multi}\left\{
\begin{array}{lcl}
\bu_{1,m+1} & =& \text{e}^{\Delta t \bA_{1}} \bu_{1,m} + \bA_{1}^{-1} \left (\text{e}^{\Delta t \bA_{1} } - \bI \right ) \bF_{1,m+1}, \vspace{3pt} \\
\bu_{2,m+1} & = &\text{e}^{\Delta t \bA_{2}} \bu_{2,m} + \bA_{2}^{-1} \left (\text{e}^{\Delta t \bA_{2} } - \bI \right ) \bF_{2,m+1},
\end{array}\right.
\end{equation}
for $m=0, 1, \hdots,  M-1$, where
\begin{align*}
\bF_{1,m}&=F_{1}\left (f(t_{m}), \psi_{1}(t_{m}), \bu_{2,m}(N_{\beta,\alpha})\right ), \vspace{3pt}\\
 \bF_{2,m}&=F_{2}\left (f(t_{m}), \bu_{1,m}(N_{\alpha}), \psi_{2}(t_{m})\right ).
\end{align*} 

Alternatively, one can also use ETD2 to obtain a fully discrete multidomain solution for  \eqref{semi1}-\eqref{semi2} by solving the following coupled local equations defined in $\Omega_{1}$ and $\Omega_{2}$ respectively: 
\begin{equation} \label{ETD2Multi}\left\{
\begin{array}{lcl}
\widetilde{\bu}_{1,m+1} &=& \text{e}^{\Delta t \bA_{1}} \bu_{m} + \bA_{1}^{-1} \left (\text{e}^{\Delta t \bA_{1}} - \bI \right ) \bF_{1,m}, \vspace{3pt}\\ 
\bu_{1,m+1} &= & \widetilde{\bu}_{1,m+1} +(\Delta t)^{-1}\bA_{1}^{-2} \left (\text{e}^{\Delta t \bA_{1}} - \bI - \Delta t \bA_{1}\right ) \left (\bF_{1,m+1} - \bF_{1,m} \right ), \vspace{4pt}\\
\widetilde{\bu}_{2,m+1} &= &\text{e}^{\Delta t \bA_{2}} \bu_{m} + \bA_{2}^{-1} \left (\text{e}^{\Delta t \bA_{2}} - \bI \right ) \bF_{2,m}, \vspace{3pt}\\ 
\bu_{2,m+1} &= & \widetilde{\bu}_{2,m+1} + (\Delta t)^{-1} \bA_{2}^{-2} \left (\text{e}^{\Delta t \bA_{2}} - \bI - \Delta t \bA_{2}\right ) \left (\bF_{2,m+1}- \bF_{2,m}\right ),
\end{array} \right.
\end{equation}
for $m=0, 1, \hdots,  M-1$. We specially remark that the localized ETD methods do not give exactly the same solutions  as those by 
the corresponding monodomain  ETD methods.
Convergence of the localized ETD1 \eqref{ETD1Multi} or the localized ETD2 \eqref{ETD2Multi} solutions to the exact semi-discrete solution \eqref{semi1}-\eqref{semi2} will be proved in Section~\ref{sec:Convergence}. 

\subsection{Schwarz iteration-based overlapping domain decomposition algorithms} \label{sec:DDalgorithms}

In order to practically compute the localized ETD  solutions for the multidomain system \eqref{ETD1Multi} or \eqref{ETD2Multi}, one need to decouple the systems in subdomains by using iterative algorithms. A straightforward extension from the classical parallel Schwarz method for elliptic problems is to perform Schwarz iteration at each time step $t_{m}$ and enforce the transmission conditions on the interfaces $\left \{x=\alpha L\right \}$ and $\left \{x=\beta L\right \}$ at~$t_{m}$. Another approach is to use global-in-time domain decomposition as presented in Section~\ref{sec:cont.prob} for continuous problems, in which time-dependent problems are solved in the subdomains and information is exchanged over the space-time interfaces $\left \{x=\alpha L \cup x = \beta L \right \} \times (0,T)$. For each method, we derive formulations using either ETD1 or ETD2  as the time marching scheme. 
%
%
\subsubsection{Method 1: Iterative, localized ETD algorithms}
For each $0 \leq m \leq M-1$, assume that $\bu_{1,m}$ and $\bu_{2,m}$  are given, we shall find the solutions at time $t_{m+1}$ by applying (parallel) Schwarz iteration. Next we construct two algorithms corresponding to the use of the  ETD1 and the ETD2 schemes for the time integration. 

\vspace{4pt}
\paragraph{\textbf{Iterative, localized  ETD1 algorithm}}
With a given  initial guess of $\bu_{1,m+1}^{(0)}(N_{\alpha})$ and $\bu_{2,m+1}^{(0)}(N_{\beta, \alpha})$, 
we compute  the subdomain solutions $\bu_{1,m+1}^{(k+1)}$ and $\bu_{2,m+1}^{(k+1)}$  by: for $k=0,1,\cdots,$
\begin{equation} \label{M2-IETD1}\left\{
\begin{array}{rl} 
 \hspace{-0.2cm}\bu_{1,m+1}^{(k+1)}\, = &\text{e}^{\Delta t \bA_{1}} \bu_{1,m}  + \bA_{1}^{-1} \left (\text{e}^{\Delta t \bA_{1}} - \bI \right ) \bF_{1}\left (f(t_{m+1}), \psi_{1}(t_{m+1}), \bu_{2,m+1}^{(k)}(N_{\beta,\alpha})\right ),\vspace{4pt}\\
 \hspace{-0.2cm}\bu_{2,m+1}^{(k+1)}\, = &\text{e}^{\Delta t \bA_{2} } \bu_{2,m}  +\bA_{2}^{-1} \left (\text{e}^{\Delta t \bA_{2} } - \bI \right ) \bF_{2}\left (f(t_{m+1}), \bu_{1,m+1}^{(k)}(N_{\alpha}), \psi_{2}(t_{m+1})\right ). 
\end{array}\right.
\end{equation}
The iteration is stopped when 
\begin{equation} \label{M1-stop} 
\frac{\vert \bu_{1,m+1}^{(k+1)}(N_{\alpha}) - \bu_{1,m+1}^{(k)}(N_{\alpha}) \vert}{\vert \bu_{1,m+1}^{(0)}(N_{\alpha})\vert } < \varepsilon \text{\,\, and \,\,}
\frac{\vert \bu_{2,m+1}^{(k+1)}(N_{\beta,\alpha}) - \bu_{2,m+1}^{(k)}(N_{\beta,\alpha})\vert }{\vert \bu_{2,m+1}^{(0)}(N_{\beta, \alpha})\vert}   < \varepsilon
\end{equation} 
for a given tolerance $\varepsilon$, then {it moves} to the next time step.

\vspace{4pt}
\paragraph{\textbf{Iterative, localized ETD2 algorithm}}
To find the solution at $t_{m+1}$, we first compute $\widetilde{\bu}_{1,m+1}$ and $\widetilde{\bu}_{2,m+1}$ from the known values of $\bu_{1,m}$ and $\bu_{2,m}$ as follows:
\begin{equation*} \label{M1-tilde}
\begin{array}{l} \widetilde{\bu}_{1,m+1} = \text{e}^{\Delta t \bA_{1}} \bu_{1,m}+ \bA_{1}^{-1} \left (\text{e}^{\Delta t \bA_{1}} - \bI \right ) \bF_{1}\left (f(t_{m}), \varphi_{1}(t_{m}), \bu_{2,m}(N_{\beta,\alpha})\right ),  \vspace{3pt}\\
\widetilde{\bu}_{2,m+1} = \text{e}^{\Delta t \bA_{2} } \bu_{2,m} + \bA_{2}^{-1} \left (\text{e}^{\Delta t \bA_{2} } - \bI \right ) \bF_{2}\left (f(t_{m}), \bu_{1,m}(N_{\alpha}), \varphi_{2}(t_{m})\right ).
\end{array}
\end{equation*}
Then we set
$$\bu_{1,m+1}^{(0)}(N_{\alpha}) = \widetilde{\bu}_{1,m+1}(N_{\alpha}) , \; \; \bu_{2,m+1}^{(0)}(N_{\beta,\alpha}) = \widetilde{\bu}_{2,m+1}(N_{\beta,\alpha}).$$
With this initial guess, we can start the iteration as: for $k=0,1,\cdots,$
\begin{equation} \label{M2-ETD2}\left\{
\begin{array}{l}
 \hspace{-0.1cm}\bu_{1,m+1}^{(k+1)}  =  \widetilde{\bu}_{1,m+1} + (\Delta t)^{-1} \bA_{1}^{-2} \left (\text{e}^{\Delta t \bA_{1}} - \bI - \Delta t \bA_{1}\right )   \vspace{4pt}\\
 \hspace{0.4cm} \cdot \Big[ \bF_{1}\left (f(t_{m+1}), \psi_{1}(t_{m+1}), \bu_{2,m+1}^{(k)}(N_{\beta,\alpha})\right ) -  \bF_{1}\left (f(t_{m}), \psi_{1}(t_{m}), \bu_{2,m}(N_{\beta,\alpha})\right ) \Big].  \vspace{4pt}\\
 \hspace{-0.2cm} \bu_{2,m+1}^{(k+1)} =  \widetilde{\bu}_{2,m+1} + (\Delta t)^{-1} \bA_{2}^{-2} \left (\text{e}^{\Delta t \bA_{2} } - \bI - \Delta t \bA_{2} \right ) \hspace{5cm} \vspace{4pt} \\
 \hspace{0.3cm}  \cdot \Big[\bF_{2}\left (f(t_{m+1}), \bu_{1,m+1}^{(k)}(N_{\alpha}), \psi_{2}(t_{m+1})\right ) - \bF_{2}\left (f(t_{m}), \bu_{1,m}(N_{\alpha}),  \psi_{2}(t_{m})\right ) \Big].
\end{array} \right.
\end{equation}
When it converges (i.e. the stopping criterion \eqref{M1-stop} is satisfied), we move to the next time step. 
%
%
\subsubsection{Method 2: Global-in-time, iterative, localized ETD algorithms}
Differently from Method~1, we can solve time-dependent problems at each iteration as a more general approach. For a given initial guess of $\left (\bu_{1,m}^{(0)}(N_{\alpha})\right )_{1\leq m \leq M}$ and $\left (\bu_{2,m}^{(0)}(N_{\beta,\alpha})\right )_{1\leq m \leq M}$, we shall compute, at the $(k+1)$-iteration, the solution $\bu_{1, m}^{(k+1)}$ and $\bu_{2, m}^{(k+1)}$ over all time steps $1\leq m\leq M$. \vspace{4pt}
\paragraph{\textbf{Global-in-time, iterative, localized ETD1 algorithm}}
Using the ETD1 scheme, we compute the approximate solution in each subdomain over all time steps $m$  in parallel:  for $k=0,1,\cdots,$ and $0\leq m \leq M-1$, 
\begin{equation} \label{M3-IETD1}\left\{
\begin{array}{ll}
\bu_{1,m+1}^{(k+1)} \;=& \text{e}^{\Delta t \bA_{1}} \bu_{1,m}^{(k+1)}+\bA_{1}^{-1} \left (\text{e}^{\Delta t \bA_{1}} - \bI \right ) \bF_{1}\left (f(t_{m+1}), \psi_{1}(t_{m+1}), \bu_{2,m+1}^{(k)}(N_{\beta,\alpha})\right ), \vspace{4pt}\\
\bu_{2,m+1}^{(k+1)}\; =& \text{e}^{\Delta t \bA_{2} } \bu_{2,m}^{(k+1)} +\bA_{2}^{-1} \left (\text{e}^{\Delta t \bA_{2} } - \bI \right ) \bF_{2}\left (f(t_{m+1}), \bu_{1,m+1}^{(k)}(N_{\alpha}), \psi_{2}(t_{m+1})\right ), 
\end{array}\right.
\end{equation}
We stop the iteration when the following conditions are satisfied:
\begin{equation}
\frac{\vert \bu_{1, \cdot}^{(k+1)}(N_{\alpha}) - \bu_{1, \cdot}^{(k)}(N_{\alpha})\vert_{T}}{\vert \bu_{1, \cdot}^{(0)}(N_{\alpha})\vert_{T}} < \varepsilon \text{\quad and \quad}
\frac{\vert \bu_{2,\cdot}^{(k+1)}(N_{\beta,\alpha}) - \bu_{2, \cdot}^{(k)}(N_{\beta,\alpha})\vert_{T}}{\vert \bu_{2, \cdot}^{(0)}(N_{\beta, \alpha})\vert_{T}} < \varepsilon.
\end{equation}
\begin{remark}
As time-dependent problems are solved in the subdomains, one may use different time grids in the subdomain and enforce the transmission conditions over nonconforming time grids by using $L^{2}$ projections \cite{GHN2003, GJaphet09}. This possibility can be very important and useful for applications in which the time scales vary by several orders of magnitude between the subdomains. 
\end{remark} \vspace{4pt}
\paragraph{\textbf{Global-in-time, iterative, localized ETD2 algorithm}}
The second order scheme can be derived similarly, in particular, we solve in parallel the following subdomain problems: for $k=0,1,\cdots,$\vspace{3pt}\\
$\bullet$ In subdomain $\Omega_{1}$: first compute
\begin{equation*} 
\begin{array}{l}
\widetilde{\bu}_{1,m+1}^{(k+1)} = \text{e}^{\Delta t \bA_{1}} \bu_{1,m}^{(k+1)} +\bA_{1}^{-1} \left (\text{e}^{\Delta t \bA_{1}} - \bI \right ) \bF_{1}\left (f(t_{m}), \psi_{1}(t_{m}), \bu_{2,m}^{(k)}(N_{\beta,\alpha})\right ),
\end{array} 
\end{equation*}
then update  \vspace{-0.2cm}
\begin{equation} \label{M3-ETD2-u1}
\begin{array}{ll}
\bu_{1,m+1}^{(k+1)} \;=&  \widetilde{\bu}_{1,m+1}^{(k+1)}+ (\Delta t)^{-1} \bA_{1}^{-2} \left (\text{e}^{\Delta t \bA_{1}} - \bI - \Delta t \bA_{1}\right ) \vspace{4pt}\\
& \hspace{-1.5cm}  \cdot \Big[ \bF_{1}\left (f(t_{m+1}), \psi_{1}(t_{m+1}), \bu_{2,m+1}^{(k)}(N_{\beta,\alpha})\right ) - \bF_{1}\left (f(t_{m}), \psi_{1}(t_{m}), \bu_{2,m}^{(k)}(N_{\beta,\alpha})\right ) \Big], \\
& \hspace{7.5cm} \,\; 0\leq m \leq M-1. 
\end{array} \vspace{-0.2cm}
\end{equation}
$\bullet$ In subdomain $\Omega_{2}$: 
\begin{equation*} 
\begin{array}{l}
\widetilde{\bu}_{2,m+1}^{(k+1)}= \text{e}^{\Delta t \bA_{2} } \bu_{2,m}^{(k+1)} +\bA_{2}^{-1} \left (\text{e}^{\Delta t \bA_{2} } - \bI \right ) \bF_{2}\left (f(t_{m}), \bu_{1,m}^{(k)}(N_{\alpha}), \psi_{2}(t_{m})\right ), 
\end{array} \vspace{-0.2cm}
\end{equation*}
then update \vspace{-0.2cm}
\begin{equation} \label{M3-ETD2-u2}
\begin{array}{ll}
\bu_{2,m+1}^{(k+1)}\; =&  \widetilde{\bu}_{2,m+1}^{(k+1)} + (\Delta t)^{-1} \bA_{2}^{-2} \left (\text{e}^{\Delta t \bA_{2} } - \bI - \Delta t \bA_{2} \right ) \vspace{4pt}\\
&\hspace{-1.5cm}  \cdot \Big[ \bF_{2}\left (f(t_{m+1}), \bu_{1,m+1}^{(k)}(N_{\alpha}), \psi_{2}(t_{m+1})\right ) -\bF_{2}\left (f(t_{m}), \bu_{1,m}^{(k)}(N_{\alpha}),  \psi_{2}(t_{m})\right ) \Big],  \\
& \hspace{7cm} \, \; 0\leq m \leq M-1. 
\end{array} \vspace{-0.1cm}
\end{equation}
Note that, for any $k$, $\bu_{1,0}^{(k)}(N_{\alpha}) = \bu_{0}(N_\alpha)$ and $\bu_{2,0}^{(k)}(N_{\beta,\alpha})=\bu_{0}(N_{\beta})$. 

\section{Convergence analysis} \label{sec:Convergence} 

We will demonstrate the convergence of the localized ETD1 or ETD2 solution $\left (\bu_{1,m}, \bu_{2, m}\right )$ to the exact semi-discrete solution $\left(\bu_{1}, \bu_{2}\right ) $ as $\Delta t \rightarrow 0$, and the convergence of the iterative solution $\left (\bu^{(k)}_{1,m}, \bu^{(k)}_{2, m}\right )$ to the corresponding localized ETD solution as $k \rightarrow \infty$. These results guarantee that the iterative solution of both methods converges to the exact solution of the model problem. The proofs are mainly based on the maximum principle of the ETD schemes and some techniques similar to those used in \cite{GanderStuart98}. 
We shall define the following discrete infinity norms: 
\begin{align*}
&  \| \bu_{m} \|_{\infty} = \max_{1\leq j \leq N} \vert u_{m}(j)\vert, \quad \vert \bu(j) \vert_{T} = \max_{1\leq m \leq M} \vert u_{m} (j) \vert, & \vspace{4pt}\\
& \hspace{1cm} \text{and} \; \; \| \bu \|_{\infty, T} = \max_{1\leq j \leq N}  \max_{1\leq m \leq M} \vert u_{m}(j) \vert,\vspace{-0.2cm}
 \end{align*}
for any {$\bu=(u_{m}(j))_{1\leq j \leq N, \, 1\leq m\leq M}$}.

\subsection{Preliminary results}  
We first present some useful results. \vspace{4pt}
%
%
\begin{lemma} \label{lmm:DMP} (Discrete Nonnegativity Property) Assume that $\bu_{m}, \, 1 \leq m~\leq~M,$ is the solution to the following problem: \vspace{-0.2cm}
\begin{equation} \label{discretesol}
\bu_{m+1}= \text{e}^{\Delta t\bA} \bu_{m}+ \int_{0}^{\Delta t} \text{e}^{(\Delta t-s) \bA} \bF(t_{m}+s) \;ds,  \quad 0 \leq m \leq M-1, 
\end{equation}
with $\bu (0) = \bu_{0}$ and $\bF (t_{m})=(\psi_{1}(t_{m}), 0, \hdots, 0, \psi_{2}(t_{m}))^{\top}$. If $\psi_{1}(t)$ and $\psi_{2}(t)$ are non-negative on $[0,T]$ and $u_{0}(j) \geq 0, \, \forall\; 1 \leq j \leq n$ then  \vspace{-0.2cm}
\begin{equation*} 
\bu_{m}\geq {\pmb{0}}, \quad  1 \leq m \leq M. \vspace{-0.2cm}
\end{equation*} 
\end{lemma}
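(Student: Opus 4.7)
The statement is essentially the discrete analogue of the maximum principle for the heat semigroup, so the plan is to reduce it to two separate positivity facts: (i) $\text{e}^{\tau \bA}$ is entrywise nonnegative for every $\tau \geq 0$, and (ii) if $\bu_{m}$ and the integrand are nonnegative, then the variation-of-constants formula \eqref{discretesol} produces a nonnegative $\bu_{m+1}$. Given (i), step (ii) is immediate by a one-step induction on $m$.

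The first step is to establish that $\text{e}^{\tau \bA}$ has nonnegative entries, where $\bA=\bA_{(N)}$ is the tridiagonal matrix with diagonal $-2\nu/h^{2}$ and off-diagonals $\nu/h^{2}$. The key idea is a Metzler splitting: write
\begin{equation*}
\bA = \bB - c\bI, \qquad c := \frac{2\nu}{h^{2}},
\end{equation*}
so that $\bB$ is tridiagonal with zero diagonal and positive off-diagonal entries $\nu/h^{2}$. Since $\bI$ commutes with $\bB$,
\begin{equation*}
\text{e}^{\tau \bA} = \text{e}^{-c\tau}\,\text{e}^{\tau \bB} = \text{e}^{-c\tau}\sum_{k=0}^{\infty} \frac{\tau^{k}}{k!}\bB^{k},
\end{equation*}
and every term in this Neumann series has nonnegative entries (powers of a nonnegative matrix are nonnegative, and $\text{e}^{-c\tau}>0$). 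Hence $\text{e}^{\tau \bA}\geq 0$ entrywise for all $\tau \geq 0$.

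Now I would carry out the induction on $m$. The base case $\bu_{0}\geq \pmb{0}$ is assumed. For the inductive step, suppose $\bu_{m}\geq \pmb{0}$. Then $\text{e}^{\Delta t \bA}\bu_{m}\geq \pmb{0}$ by step (i) combined with the hypothesis. For the inhomogeneous term, note that for every $s\in[0,\Delta t]$ the vector $\bF(t_{m}+s)=(\psi_{1}(t_{m}+s),0,\ldots,0,\psi_{2}(t_{m}+s))^{\top}$ has nonnegative entries by the assumption on $\psi_{1},\psi_{2}$, so $\text{e}^{(\Delta t - s)\bA}\bF(t_{m}+s)\geq \pmb{0}$ componentwise. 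Since pointwise-nonnegative integrands produce nonnegative integrals,
\begin{equation*}
\int_{0}^{\Delta t} \text{e}^{(\Delta t - s)\bA}\bF(t_{m}+s)\,ds \;\geq\; \pmb{0}.
\end{equation*}
Adding the two nonnegative contributions in \eqref{discretesol} gives $\bu_{m+1}\geq \pmb{0}$, completing the induction.

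There is no serious obstacle here; the only thing to be careful about is the choice of shift $c$: one must pick $c$ at least as large as $\max_{j} |\bA_{jj}|$ so that $\bA + c\bI$ has no negative entries. Since $\bA$ is tridiagonal with all off-diagonals already positive, $c = 2\nu/h^{2}$ does the job uniformly in $N$, and the argument does not rely on sign information for $f$ (which has been set to zero in this lemma). The same splitting will also be the workhorse for the later analysis of the localized ETD schemes, since each $\bA_{i}$ in the subdomains inherits exactly the same Metzler structure.
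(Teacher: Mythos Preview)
Your proposal is correct and follows essentially the same route as the paper: the paper also splits $\bA=-\frac{2\nu}{h^{2}}\bI+\bM$ with $\bM$ entrywise nonnegative, uses the series $\text{e}^{t\bA}=\text{e}^{-2t\nu/h^{2}}\sum_{j\geq 0}\frac{t^{j}\bM^{j}}{j!}\geq 0$, and then concludes by induction on $m$. Your write-up is a bit more explicit about the integral term and the role of the shift, but there is no substantive difference.
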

%
%
\begin{proof}
 At the first time level $t_{1}= \Delta t$, we have: \vspace{-0.2cm} 
\begin{equation} \label{ETDsol}
 \bu_{1}= \text{e}^{\Delta t \bA} \bu_{0} +  \int_{0}^{\Delta t} \text{e}^{(\Delta t-s) \bA} \bF(s) ds, \; \; t_{1}=\Delta t. \vspace{-0.2cm}
\end{equation}
The matrix $\text{e}^{t \bA}, \, t \geq 0$ has nonnegative entries since $\bA = -2{\frac{\nu}{h^{2}}} \bI + \bM$ ($\bI$ is the identity matrix and $\bM$ contains only nonnegative entries) and  \vspace{-0.2cm}
$$ \text{e}^{t\bA} = \text{e}^{-2t{\frac{\nu}{h^{2}}}\bI } \text{e}^{t\bM}= \text{e}^{-2t{\frac{\nu}{h^{2}}}}\sum_{j=0}^{\infty} \frac{t^{j}M^{j}}{j!} \; \geq 0.  
$$
Using this and \eqref{ETDsol}, we conclude that $\bu_{1} \geq 0$ given that $\bu_{0} \geq 0$ and $\bF(s) \geq~0$ for $ 0 \leq s \leq \Delta t$. By induction, the proof is completed.  
\end{proof} 
\vspace{10pt}

%
%
\begin{lemma} \label{lmm:estimate} (Discrete maximum principle)
Assume that $\bu_{m}, \, 1 \leq m \leq M$ solves the discrete diffusion equation \eqref{discretesol} with $\bF(t_{m})=\left (\frac{\nu}{h^2} \psi_{1}(t_{m}), 0, \hdots, 0, \frac{\nu}{h^2} \psi_{2}(t_{m})\right )^{\top}$ and $\bu_{0}=0$.
Then $\bu$ satisfies the following inequality: 
\begin{equation*}
\vert \bu_{m} (j) \vert \leq \frac{N+1-j}{N+1} \vert \psi_{1}\vert_{T} + \frac{j}{N+1} \vert \psi_{2}\vert_{T}, \quad 1 \leq j \leq N, \; 1 \leq m \leq M. \vspace{-0.1cm}
\end{equation*} 
\end{lemma}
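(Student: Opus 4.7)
The plan is to use Lemma~\ref{lmm:DMP} as a comparison principle with a carefully chosen barrier. Observe that the componentwise linear function
$$ v^{*}(j) := \frac{N+1-j}{N+1}\vert \psi_{1}\vert_{T} + \frac{j}{N+1}\vert \psi_{2}\vert_{T},\quad 1\leq j \leq N, $$
is precisely the unique stationary solution of the discrete diffusion problem driven by the ``peak'' boundary data, namely it satisfies $\bA v^{*} + \bF^{*} = 0$ with $\bF^{*} := \bigl(\tfrac{\nu}{h^{2}}\vert\psi_{1}\vert_{T},0,\ldots,0,\tfrac{\nu}{h^{2}}\vert\psi_{2}\vert_{T}\bigr)^{\top}$. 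First I would verify that $v^{*}$ is a fixed point of the ETD iteration with constant source $\bF^{*}$; this follows from $\bA^{-1}(\text{e}^{\Delta t\bA}-\bI)\bF^{*} = -(\text{e}^{\Delta t\bA}-\bI)v^{*}$, so $v^{*} = \text{e}^{\Delta t\bA}v^{*} + \int_{0}^{\Delta t}\text{e}^{(\Delta t-s)\bA}\bF^{*}\,ds$.

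Next, I would form the two auxiliary sequences $\bw^{\pm}_{m} := v^{*}\pm\bu_{m}$. Subtracting/adding the ETD representation of $v^{*}$ and of $\bu_{m}$ yields
$$\bw^{\pm}_{m+1} \;=\; \text{e}^{\Delta t\bA}\bw^{\pm}_{m} + \int_{0}^{\Delta t}\text{e}^{(\Delta t-s)\bA}\bigl(\bF^{*}\pm\bF(t_{m}+s)\bigr)\,ds,$$
with initial datum $\bw^{\pm}_{0}=v^{*}\geq 0$. The crucial sign observation is that
$$\bF^{*}\pm\bF(t_{m}+s) = \Bigl(\tfrac{\nu}{h^{2}}(\vert\psi_{1}\vert_{T}\pm\psi_{1}(t_{m}+s)),\,0,\,\ldots,\,0,\,\tfrac{\nu}{h^{2}}(\vert\psi_{2}\vert_{T}\pm\psi_{2}(t_{m}+s))\Bigr)^{\top},$$
whose nonzero entries are nonnegative by definition of $\vert\cdot\vert_{T}$.

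Thus each $\bw^{\pm}_{m}$ solves an ETD iteration of exactly the form covered by Lemma~\ref{lmm:DMP}, with nonnegative boundary data and nonnegative initial state. Applying that lemma gives $\bw^{\pm}_{m}\geq \pmb{0}$ for all $1\leq m\leq M$, which is equivalent to $-v^{*}\leq \bu_{m}\leq v^{*}$ componentwise, and the stated bound follows.

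The main obstacle I anticipate is identifying the right barrier and checking that the residual source terms $\bF^{*}\pm\bF$ stay supported only on the first and last components so that Lemma~\ref{lmm:DMP} applies verbatim; once this structural observation is made, the rest reduces to the ETD identity $\bA^{-1}(\text{e}^{\Delta t\bA}-\bI)\bA v^{*} = (\text{e}^{\Delta t\bA}-\bI)v^{*}$ and a direct invocation of the discrete nonnegativity property.
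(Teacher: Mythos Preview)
Your proposal is correct and essentially identical to the paper's proof: the paper uses the same barrier $\widetilde{\bu}_{0}(j)=\frac{N+1-j}{N+1}\vert\psi_{1}\vert_{T}+\frac{j}{N+1}\vert\psi_{2}\vert_{T}$, verifies $\bA\widetilde{\bu}_{0}+\widetilde{\bF}=\pmb{0}$ to conclude it is a fixed point of the ETD recursion, and then applies Lemma~\ref{lmm:DMP} to the differences $\widetilde{\bu}_{m}\pm\bu_{m}$. The only cosmetic difference is notation.
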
 
\begin{proof}
Consider $\widetilde{\bu}$ satisfying \vspace{-0.2cm}
\begin{equation} \label{uhat}
\widetilde{\bu}_{m+1} = \text{e}^{\Delta t \bA} \widetilde{\bu}_{m} +  \int_{0}^{\Delta t} \text{e}^{(\Delta t -s) \bA} \widetilde{\bF} ds , \; \; m=0, \hdots, M-1, \vspace{-0.2cm}
\end{equation}
with \vspace{-0.2cm}
$$
\widetilde{\bu}_{0}(j) = \frac{N+1-j}{N+1} \vert \psi_{1}\vert_{T} + \frac{j}{N+1} \vert \psi_{2}\vert_{T}, \; \; \widetilde{\bF} = \left (\frac{\nu}{h^2} \vert \psi_{1}\vert_{T}, 0, \hdots, 0, \frac{\nu}{h^2} \vert \psi_{2}\vert_{T}\right )^{\top}. 
$$
We recall the following properties of the matrix $\bA$ of the cell-centered finite difference scheme: let $\bv = (1, 2, \hdots, j, \hdots, N)^{{\top}}$ and  $\widehat{\pmb{v}}= (N, N-1, \hdots, N+1-j, \hdots, 1)^{{\top}}$, then \vspace{-0.2cm}
\begin{equation*}
\bA \bv = \left (0, 0, \hdots, 0, -\nu\frac{(N+1)}{h^2}\right )^{{\top}}, \; \; \text{and} \; \; 
\bA \widehat{\pmb{v}} = \left (-\nu\frac{(N+1)}{h^2}, 0, \hdots, 0\right )^{\top}. 
\end{equation*}
Using these equations, we find that
\begin{equation*} \label{uhat0}
 \bA \widetilde{\bu}_{0}+ \widetilde{\bF}  = \pmb{0}.  \vspace{-0.2cm}
\end{equation*}
Substituting this into \eqref{uhat} at $\Delta t$ yields
\begin{align*}
\widetilde{\bu} (\Delta t) = \text{e}^{\Delta t \bA} \widetilde{\bu}_{0} +   \bA^{-1} \left (\text{e}^{\Delta t\bA} - \bI \right ) \left (-\bA {\widetilde{\bu}_{0}}\right )  = \widetilde{\bu}_{0}.
\end{align*}
By induction, we see that the solution $\widetilde{\bu}$ does not depend on time:
\begin{equation*}
\widetilde{\bu}_{m} (j) = \frac{N+1-j}{N+1} \vert \psi_{1}\vert_{T} + \frac{j}{N+1} \vert \psi_{2}\vert_{T}, \quad \; 0 \leq m \leq M, \; j=1,2, \hdots, N. 
\end{equation*}
Define $\underline{\pmb{u}}_{m}(j) = \widetilde{\bu}_{m}(j) - \bu_{m} (j)$. Then by the discrete nonnegativity property we have that $\underline{\pmb{u}}_{m} (j) \geq 0$ for all $1 \leq j \leq N$ and $ 1\leq m \leq M$. This gives 
\begin{equation*}
\bu_{m} (j) \leq  \frac{N+1-j}{N+1} \vert \psi_{1}\vert_{T} + \frac{j}{N+1} \vert \psi_{2}\vert_{T}, \quad \; 1 \leq m \leq M, 
 \; j=1,2, \hdots, N. 
\end{equation*}
Similarly, define $\overline{\pmb{u}}_{m}(j) =  \widetilde{\bu}_{m}(j) + \bu_{m} (j)$, we have that \vspace{5pt} \\
\phantom{a} $
\bu_{m} (j) \geq - \left ( \frac{N+1-j}{N+1} \vert \psi_{1}\vert_{T} + \frac{j}{N+1} \vert \psi_{2}\vert_{T}\right ), \quad 1 \leq m \leq M, \; j=1,2, \hdots, N. $ \hspace{0.4cm}
\end{proof} \vspace{0.1cm}\\
%
%
\begin{remark}\label{rmk:LemmaETDboth}
The results in Lemmas~\ref{lmm:DMP} and \ref{lmm:estimate} obviously hold if $\bu_{m}$ in \eqref{discretesol} is approximated by either ETD1 \eqref{IETD1Mono} or ETD2 \eqref{ETD2Mono}. 
\end{remark}  \vspace{0.1cm}\\
We further present a useful corollary of Lemma~\ref{lmm:estimate}. \vspace{3pt}
\begin{corollary} \label{coro:errorbound}
Assume that $\bu_{m}$ satisfies
\begin{equation} \label{coroBoundError}
\begin{array}{ll}
\vert \bu_{m+1}(j)\vert  &\leq \left \vert \left (\text{e}^{\Delta t \bA} \bu_{m} + \int_{0}^{\Delta t} \text{e}^{(\Delta t-s) \bA} \bF(t_{m}+s) ds\right )(j) \right \vert + C, \vspace{3pt}\\
& \hspace{2.5cm} \; \forall\; j=1, \hdots, N, \;  m=0, \hdots, M-1,
\end{array}
\end{equation}
with $\bu_{0}=\pmb{0}$, $\bF(t_{m})=\left (\frac{\nu}{h^2} \psi_{1}(t_{m}), 0, \hdots, 0, \frac{\nu}{h^2} \psi_{2}(t_{m})\right )^{\top}$ and $C$ a positive constant. Then
\begin{equation} \label{boundError}
\vert \bu_{m}(j) \vert \leq \frac{N+1-j}{N+1} \vert \psi_{1}\vert_{T} + \frac{j}{N+1} \vert \psi_{2}\vert_{T} + mC, \quad  m=1, \hdots, M. 
\end{equation}
\end{corollary}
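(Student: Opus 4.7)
The plan is to establish the bound by induction on $m$, using the stationary profile $\bw(j) := \frac{N+1-j}{N+1}|\psi_{1}|_{T} + \frac{j}{N+1}|\psi_{2}|_{T}$ from the proof of Lemma~\ref{lmm:estimate} as the spatial template, with the constant $mC$ absorbing the extra $+C$ that is tacked on at each step. Thus the target upper bound takes the form $|\bu_{m}(j)| \leq \bw(j) + mC$, which I will propagate through the recursion in \eqref{coroBoundError}.

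Two elementary ingredients drive the induction. First, the semigroup $\text{e}^{t\bA}$ is entrywise nonnegative, as was verified in the proof of Lemma~\ref{lmm:DMP} via the splitting $\bA = -\frac{2\nu}{h^{2}}\bI + \bM$ with $\bM \geq 0$; consequently the triangle inequality applied to \eqref{coroBoundError} yields
$$|\bu_{m+1}(j)| \leq \bigl(\text{e}^{\Delta t\bA}|\bu_{m}|\bigr)(j) + \int_{0}^{\Delta t}\bigl(\text{e}^{(\Delta t - s)\bA}\,\widetilde{\bF}\bigr)(j)\, ds + C,$$
where $\widetilde{\bF} := \bigl(\tfrac{\nu}{h^{2}}|\psi_{1}|_{T}, 0, \ldots, 0, \tfrac{\nu}{h^{2}}|\psi_{2}|_{T}\bigr)^{\top}$ dominates $|\bF(t_{m}+s)|$ entrywise. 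Second, I would verify the subconservativity property $\text{e}^{\Delta t\bA}\pmb{1} \leq \pmb{1}$ componentwise for $\pmb{1} := (1,\ldots,1)^{\top}$: the vector-valued function $\pmb{q}(t) := \text{e}^{t\bA}\pmb{1}$ solves $\pmb{q}\,' = \bA\,\pmb{q}$ with $\pmb{q}(0) = \pmb{1}$, and since $\bA\pmb{1} = \bigl(-\tfrac{\nu}{h^{2}}, 0, \ldots, 0, -\tfrac{\nu}{h^{2}}\bigr)^{\top} \leq \pmb{0}$, the same nonnegativity argument used in Lemma~\ref{lmm:DMP}, applied now to $\pmb{1} - \pmb{q}(t)$, delivers the claim.

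With these in place the induction is essentially mechanical. The base case $m=0$ holds trivially from $\bu_{0} = \pmb{0}$. For the inductive step, assuming $|\bu_{m}| \leq \bw + mC\,\pmb{1}$ componentwise, the entrywise monotonicity of $\text{e}^{\Delta t\bA}$ on nonnegative vectors (again from Lemma~\ref{lmm:DMP}) gives
$$|\bu_{m+1}(j)| \leq \bigl(\text{e}^{\Delta t\bA}\bw\bigr)(j) + mC\,\bigl(\text{e}^{\Delta t\bA}\pmb{1}\bigr)(j) + \int_{0}^{\Delta t}\bigl(\text{e}^{(\Delta t - s)\bA}\widetilde{\bF}\bigr)(j)\,ds + C.$$
The stationarity identity $\text{e}^{\Delta t\bA}\bw + \int_{0}^{\Delta t}\text{e}^{(\Delta t - s)\bA}\widetilde{\bF}\,ds = \bw$ derived inside the proof of Lemma~\ref{lmm:estimate} collapses the $\bw$-pieces back to $\bw(j)$, while $\text{e}^{\Delta t\bA}\pmb{1} \leq \pmb{1}$ upgrades $mC\,(\text{e}^{\Delta t\bA}\pmb{1})(j) + C$ to at most $(m+1)C$, closing the induction and yielding \eqref{boundError}. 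The only step that requires genuine care is the subconservativity $\text{e}^{\Delta t\bA}\pmb{1} \leq \pmb{1}$, which rests on the fact that $\bA$ has nonpositive row sums, a feature specific to the cell-centered finite difference stencil used here; once that is in hand the rest reduces to bookkeeping.
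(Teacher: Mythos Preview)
Your proof is correct and shares the paper's inductive skeleton: both aim for $|\bu_{m}(j)|\leq \bw(j)+mC$ with $\bw$ the stationary profile of Lemma~\ref{lmm:estimate}, and both drive the step via the entrywise nonnegativity of $\text{e}^{t\bA}$. The one difference lies in how the accumulated constant $mC$ is passed through a time step. You peel it off and invoke the subconservativity bound $\text{e}^{\Delta t\bA}\pmb{1}\leq\pmb{1}$, which you correctly justify from $\bA\pmb{1}\leq\pmb{0}$. The paper instead absorbs $mC$ into the boundary data, setting $\widetilde{\bu}_{m}(j)=\frac{N+1-j}{N+1}(|\psi_{1}|_{T}+mC)+\frac{j}{N+1}(|\psi_{2}|_{T}+mC)$; this is again a stationary solution (for an augmented source $\widetilde{\bF}_{m}$), so the same argument as in Lemma~\ref{lmm:estimate} applies directly and no separate subconservativity lemma is needed. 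The paper's route is marginally more economical; yours makes explicit that the nonpositive row sums of $\bA$ are what allow the constant to propagate unchanged. Either way the argument is sound.
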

\begin{proof}
The bound \eqref{boundError} is proved by induction. For $m=1$, \eqref{boundError} holds as a consequence of Lemma~\ref{lmm:estimate}. 
Now assume that \eqref{boundError} holds for some fixed $m$. Define an auxiliary solution \vspace{-0.2cm}
$$ \widetilde{\bu}_{m}(j) = \frac{N+1-j}{N+1} \left (\vert \psi_{1}\vert_{T} + mC\right ) + \frac{j}{N+1} \left (\vert \psi_{2}\vert_{T} +mC\right ), \quad j =1, \hdots, N, \vspace{-0.2cm}
$$
which satisfies  \vspace{-0.2cm}
$$ \widetilde{\bu}_{m} - \bu_{m} \geq 0, \quad \text{and} \quad \bA \widetilde{\bu}_{m} = \widetilde{\bF}_{m},  \vspace{-0.2cm}
$$
where $\widetilde{\bF}_{m} =  \left (\frac{\nu}{h^2} \left (\vert \psi_{1}\vert_{T} + mC\right ), 0, \hdots, 0, \frac{\nu}{h^2} \left (\vert \psi_{2}\vert_{T} +mC\right )\right )^{\top}$. \vspace{4pt}\\
Denote by $\uubu_{m} $ the solution to \vspace{-0.2cm}
$$ \uubu_{m} = \text{e}^{\Delta t \bA} \bu_{m} + \int_{0}^{\Delta t} \text{e}^{(\Delta t-s) \bA} \bF(t_{m}+s) ds.
$$
Using the same argument as in the proof of Lemma~\ref{lmm:estimate} and by the discrete nonnegativity property, we have that $\widetilde{\bu}_{m}- \uubu_{m}\geq 0$ and $\widetilde{\bu}_{m}+ \uubu_{m}\geq 0$. This implies \vspace{-0.3cm}
$$ \vert \uubu_{m}(j)  \vert  \leq \frac{N+1-j}{N+1} \left (\vert \psi_{1}\vert_{T} + mC\right ) + \frac{j}{N+1} \left (\vert \psi_{2}\vert_{T} +mC\right ). \vspace{-0.1cm}
$$
Inserting the above inequality into \eqref{coroBoundError}, we obtain \vspace{-0.2cm}
$$  \vert \bu_{m+1}(j) \vert \leq \frac{N+1-j}{N+1} \vert \psi_{1}\vert_{T} + \frac{j}{N+1} \vert \psi_{2}\vert_{T} + (m+1)C. 
$$
By the principle of induction, \eqref{boundError} holds for all $m$. 
\end{proof}
%
%
\subsection{Convergence of the multidomain localized ETD solutions to the exact semidiscrete solution}

We next present a detailed proof in the case that the first-order ETD method is used with a nonzero source term and nonhomogeneous Dirichlet boundary conditions (see Theorem~\ref{thrm:LETD1}). The result is then extended to the second-order case (see Theorem~\ref{thrm:LETD2}). 

Our proof relies on the representation of the exact (in time) solution to the semi-discrete multidomain problem \eqref{semi1}-\eqref{semi2} by the variation-of-constants formula:
\begin{equation*} 
\left \{ \begin{array}{rcl}
\bu_{1}(t_{m+1}) & =& \text{e}^{\Delta t \bA_{1}} \bu_{1}(t_{m}) \\
&&+ \int_{0}^{\Delta t} \text{e}^{(\Delta t-s) \bA_{1}} \bF_{1}(f(t_{m}+s), \psi_{1}(t_{m}+s), \bu_{2}(N_{\beta,\alpha}, t_{m}+s)) ds, \vspace{3pt}\\
\bu_{2}(t_{m+1}) &= &\text{e}^{\Delta t \bA_{2}} \bu_{2}(t_{m}) \\
&&+ \int_{0}^{\Delta t} \text{e}^{(\Delta t-s) \bA_{2}} \bF_{2}(f(t_{m}+s), \bu_{1}(N_{\alpha}, t_{m}+s), \psi_{2}(t_{m}+s)) ds,
\end{array} \right . \vspace{-0.2cm}
\end{equation*}
for $m=0, \hdots, M-1$ with initial conditions as in \eqref{semi1}-\eqref{semi2}. 

Denote by $\be_{i,m} = \bu_{i}(t_{m})- \bu_{i,m},$ the error between the exact (in time) solution and the fully discrete localized ETD1 solution \eqref{ETD1Multi}, which satisfies: \vspace{-0.2cm}
\begin{align} \label{errorE11}
\be_{1,m+1} &= \;\text{e}^{\Delta t \bA_{1}} \be_{1,m} + \int_{0}^{\Delta t} \text{e}^{(\Delta t-s) \bA_{1}} \bF_{1}\left (0,0,\bu_{2}(N_{\beta,\alpha}, t_{m}+s)-\bu_{2,m+1}(N_{\beta,\alpha})\right ) ds \nonumber\\
& \hspace{-0.4cm}+\int_{0}^{\Delta t} \text{e}^{(\Delta t-s) \bA_{1}} \bF_{1}\left (f(t_{m}+s) - f(t_{m+1}), \psi_{1}(t_{m}+s) -\psi_{1}(t_{m+1}) , 0\right ) ds, \vspace{-0.2cm}
\end{align}
and  \vspace{-0.2cm}
\begin{align} \label{errorE2}
\be_{2,m+1} &=\; \text{e}^{\Delta t \bA_{2}} \be_{2,m} + \int_{0}^{\Delta t} \text{e}^{(\Delta t-s) \bA_{2}} \bF_{2}\left (0,\bu_{1}(N_{\alpha}, t_{m}+s) - \bu_{1,m_1}(N_{\alpha}),0\right ) ds \nonumber\\
& \hspace{-0.4cm}+\int_{0}^{\Delta t} \text{e}^{(\Delta t-s) \bA_{2}} \bF_{2}\left (f(t_{m}+s) - f(t_{m+1}), 0,\psi_{2}(t_{m}+s) -\psi_{2}(t_{m+1})\right ) ds,\vspace{-0.2cm}
\end{align}
for $m=0, \hdots, M-1$ with $\be_{1,0} =\be_{2,0}=\pmb{0}$. We have the following convergence result.\vspace{3pt}
%
%
%
%
\begin{theorem} \label{thrm:LETD1}
For sufficiently smooth data, the  localized ETD1 method converges as $\Delta t$ tends to $0$. More precisely, the following error bound holds: 
\begin{equation}\label{semierr1}
  \Vert \be_{1,\cdot}\Vert_{\infty, T} + \Vert \be_{2,\cdot}\Vert_{\infty, T} \leq C \Delta t,
\end{equation}
where $C$ is a constant depending on $T$, the size of overlap, the mesh size $h$, $\bu_{1}^{\prime}(N_{\alpha},t)$, $\bu_{2}^{\prime}(N_{\beta,\alpha},t)$, the source term $f$ and the boundary data. 
\end{theorem}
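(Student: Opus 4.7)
The plan is to bound the errors $\be_{1,m}$ and $\be_{2,m}$ in \eqref{errorE11}--\eqref{errorE2} by decomposing each integrand into a time-quadrature contribution (from freezing $f$ and $\psi_i$ at $t_{m+1}$), a temporal-smoothness contribution from the interface trace of $\bu_i$, and a pure discrete interface error. I will then invoke Corollary~\ref{coro:errorbound} on each subdomain and close the resulting coupled interface estimate using the Schwarz-type contraction factor $\kappa(\alpha,\beta) = \alpha(1-\beta)/(\beta(1-\alpha)) < 1$.

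First, under the assumed smoothness of $f$, $\psi_1$, $\psi_2$ and of $\bu_1$, $\bu_2$, the quadrature-type integrand $\bF_1(f(t_m+s)-f(t_{m+1}), \psi_1(t_m+s)-\psi_1(t_{m+1}), 0)$ is pointwise $\iO(\Delta t)$ on $[0,\Delta t]$, so after integration it contributes $\iO((\Delta t)^2)$ to $\be_{1,m+1}$ in the $\ell^\infty$ norm, and analogously in subdomain~$2$. Next, in the interface-coupling term I will split
\[
\bu_2(N_{\beta,\alpha}, t_m+s) - \bu_{2,m+1}(N_{\beta,\alpha}) = \bigl[\bu_2(N_{\beta,\alpha}, t_m+s) - \bu_2(N_{\beta,\alpha}, t_{m+1})\bigr] + \be_{2,m+1}(N_{\beta,\alpha}),
\]
the bracketed part being $\iO(\Delta t)$ by temporal smoothness of $\bu_2$ (controlled through $\bu_2'(N_{\beta,\alpha},t)$), hence producing another $\iO((\Delta t)^2)$ after integration. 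An analogous decomposition handles the coupling term in the equation for $\be_{2,m+1}$.

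After these reductions, $\be_{1,m+1}$ satisfies an inequality of the form \eqref{coroBoundError} with initial value $\pmb{0}$, with effective right-boundary datum $\be_{2,m+1}(N_{\beta,\alpha})$ in the $\psi_2$-slot for subdomain~$1$ (and symmetrically $\be_{1,m+1}(N_\alpha)$ in the $\psi_1$-slot for subdomain~$2$), and with an additive constant $C = \iO((\Delta t)^2)$. Corollary~\ref{coro:errorbound} applied on subdomain~$1$ (having $N_1 = N_\beta - 1$ interior nodes) then yields
\[
\vert \be_{1,m}(j) \vert \;\le\; \frac{j}{N_\beta}\,\vert \be_{2,\cdot}(N_{\beta,\alpha}) \vert_T + m\,C,
\]
and symmetrically on subdomain~$2$ (with $N_2 = N - N_\alpha$ interior nodes)
\[
\vert \be_{2,m}(j) \vert \;\le\; \frac{N - N_\alpha + 1 - j}{N - N_\alpha + 1}\,\vert \be_{1,\cdot}(N_\alpha) \vert_T + m\,C.
\]

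Specializing these to the interface grid points $j = N_\alpha$ and $j = N_{\beta,\alpha}$ respectively, and using the identities $N_\alpha/N_\beta = \alpha/\beta$ and $(N+1-N_\beta)/(N+1-N_\alpha) = (1-\beta)/(1-\alpha)$ (which follow from $\alpha L = N_\alpha h$, $\beta L = N_\beta h$, $L=(N+1)h$) together with $m\Delta t \le T$, I expect to obtain
\[
\vert \be_{1,\cdot}(N_\alpha) \vert_T \le \frac{\alpha}{\beta}\,\vert \be_{2,\cdot}(N_{\beta,\alpha}) \vert_T + C_1 \Delta t, \quad \vert \be_{2,\cdot}(N_{\beta,\alpha}) \vert_T \le \frac{1-\beta}{1-\alpha}\,\vert \be_{1,\cdot}(N_\alpha) \vert_T + C_2 \Delta t.
\]
Substituting one into the other, the coefficient on $\vert \be_{1,\cdot}(N_\alpha) \vert_T$ becomes exactly $\kappa(\alpha,\beta) < 1$, which yields $\vert \be_{1,\cdot}(N_\alpha) \vert_T = \iO(\Delta t)$ and likewise $\vert \be_{2,\cdot}(N_{\beta,\alpha}) \vert_T = \iO(\Delta t)$. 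Feeding these back into the componentwise bounds delivers the announced estimate \eqref{semierr1}. The main obstacle is the bookkeeping: correctly matching which entries of $\bF_i$ are activated by the interface trace versus the interior source, ensuring that the per-step smoothness error $\iO((\Delta t)^2)$ accumulated over $m = \iO(1/\Delta t)$ steps still produces only $\iO(\Delta t)$ in total, and recognizing that the same contraction factor $\kappa(\alpha,\beta)$ that drives the parallel Schwarz iteration at the continuous level (Theorem~\ref{thrm:conv.cont}) here arises directly from the spatial Green's function bound of Lemma~\ref{lmm:estimate} and is precisely what closes the otherwise circular coupled estimate.
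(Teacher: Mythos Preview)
Your proposal is correct and follows essentially the same route as the paper's proof: both arguments bound the quadrature remainder by $\iO((\Delta t)^2)$, apply Corollary~\ref{coro:errorbound} on each subdomain to obtain the weighted interface bounds $j/N_\beta$ and $(N+1-N_\beta)/(N+1-N_\alpha)$, and close the coupled system via the contraction factor $\kappa(\alpha,\beta)$. The only cosmetic differences are that the paper keeps the full term $\bu_2(N_{\beta,\alpha},t_m+s)-\bu_{2,m+1}(N_{\beta,\alpha})$ inside the corollary's $\psi_2$-slot and splits off the $\bu_2'$ piece \emph{after} applying it (so that piece inherits the harmless weight $j/N_\beta\le 1$ rather than the cruder $\sqrt{L/h}\cdot\nu/h^2$ factor your pre-splitting would incur), and that the paper combines the two interface inequalities in one step to land directly on $\kappa(\alpha,\beta)$ rather than first isolating the factors $\alpha/\beta$ and $(1-\beta)/(1-\alpha)$; neither difference affects the conclusion since the theorem allows $C$ to depend on $h$.
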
 \vspace{4pt}

\begin{proof}
From {\eqref{errorE11}}, we have that for any $0\leq m\leq M-1$:
\begin{align} \label{errorE1}
&\vert \be_{1,m+1}(j) \vert \vspace{4pt} \nonumber\\
& \hspace{0cm} \leq \left \vert \left (\text{e}^{\Delta t \bA_{1}} \be_{1,m} + \int_{0}^{\Delta t} \text{e}^{(\Delta t-s) \bA_{1}} \bF_{1}\left (0,0,\bu_{2}(N_{\beta,\alpha}, t_{m}+s)-\bu_{2,m+1}(N_{\beta,\alpha})\right ) ds\right )(j)  \right \vert \vspace{4pt}\nonumber\\
&\quad+ \left \vert \left (\int_{0}^{\Delta t} \text{e}^{(\Delta t-s) \bA_{1}} \int_{s}^{\Delta t} \bF_{1}\left (f^{\prime}(t_{m}+\tau), \psi^{\prime}_{1}(t_{m}+\tau), 0\right ) d\tau \, ds \right )(j)\right \vert. \vspace{-0.2cm}
\end{align}
Now using the fact that, {for any vector $\bu_{1} \in \mR^{N_{1}}$ and any $t \in [0,T]$: } 
\begin{equation}
\begin{array}{rcl}
 \vert (\text{e}^{t\bA_{1}} \bu_{1})(j) \vert &\leq& \Vert \text{e}^{t\bA_{1}} \bu_{1} \Vert_{\infty} 
\leq  \Vert  \text{e}^{t\bA_{1}} \Vert_{\infty} \Vert \bu_{1} \Vert_{\infty} \vspace{2pt}\\
 &\leq&   \sqrt{N_1}\Vert\text{e}^{t\bA_{1}} \Vert_{2} \Vert \bu_{1} \Vert_{\infty} \leq \sqrt{L/h}\; \|\bu_{1} \|_{\infty}, 
\end{array}
\end{equation}
for all $j =1, \hdots, N_{1}$, we can bound the last term of \eqref{errorE1} by \vspace{-0.2cm}
\begin{align*}
&\left \vert \left (\int_{0}^{\Delta t} \text{e}^{(\Delta t-s) \bA_{1}} \int_{s}^{\Delta t} \bF_{1}\left (f^{\prime}(t_{m}+\tau), \psi^{\prime}_{1}(t_{m}+\tau), 0\right ) d\tau \, ds\right )(j) \right \vert \\
&\hspace{1.2cm} \leq {(\Delta t)^{2}} \underbrace{\sqrt{L/h}\;\left (  \sup_{\substack{x \in (0,\beta L) \\ t\in (0,T)}} \vert f^{\prime}(x,t)\vert+ \frac{\nu}{h^{2}}\sup_{t\in (0,T)}\vert\psi_{1}^{\prime}(t) \vert \right )}_\text{$C_{1}$} \leq C_{1}(\Delta t)^{2}. \vspace{-0.2cm}
\end{align*}
This together with \eqref{errorE1} and Corollary~\ref{coro:errorbound} yields \vspace{-0.2cm}
\begin{equation} \label{error1a}
\begin{array}{rcl}\vert \be_{1,m+1} (j)\vert &\leq& \frac{j}{N_{1}+1} \left(\max_{0\leq l\leq M-1} \sup_{s \in [0,\Delta t]} \vert \bu_{2}(N_{\beta,\alpha}, t_{l}+s)-\bu_{2,l+1}(N_{\beta,\alpha}) \vert \right)\\
&& \hspace{0cm} + (m+1) C_{1}{(\Delta t)^{2}}.
\end{array} 
\end{equation}

Moreover, we have that  for $0\leq l\leq M-1$,
\begin{align*}
&\sup_{s \in [0,\Delta t]} \vert \bu_{2}(N_{\beta,\alpha}, t_{l}+s)-\bu_{2,l+1}(N_{\beta,\alpha}) \vert \\\
&\qquad=\sup_{s \in [0,\Delta t]} \left \vert \bu_{2}(N_{\beta,\alpha}, t_{l+1})-\bu_{2,l+1}(N_{\beta,\alpha}) - \int_{s}^{\Delta t} \bu_{2}^{\prime} (N_{\beta,\alpha}, t_{l}+\tau) \;d\tau \right  \vert\\
&\qquad \leq \vert \be_{2,l+1}(N_{\beta,\alpha})\vert + \Delta t\sup_{s \in [0,\Delta t]} \vert \bu_{2}^{\prime} (N_{\beta,\alpha}, t_{l}+s) \vert. 
\end{align*}
Inserting this into \eqref{error1a}, we deduce that 
\begin{equation}\label{error1b}
\vert \be_{1,m+1}(j)\vert  \leq \frac{j}{N_{1}+1} \left [\max_{0\leq l \leq M-1} \vert \be_{2,l+1}(N_{\beta,\alpha})\vert + \Delta t\sup_{t \in [0,T]} \vert \bu_{2}^{\prime} (N_{\beta,\alpha}, t) \vert\right ]  + C_{1}T \Delta t .
\end{equation} 
Following a same argument, one can obtain a bound for $\be_{2,m+1}$:
\begin{equation}\label{error2b}
\vert \be_{2,m+1}(j)\vert \leq \frac{N_{2}+1-j}{N_{2}+1} \left [\max_{0\leq l\leq M-1} \vert \be_{1,l+1}(N_{\alpha})\vert +\Delta t\sup_{t \in [0,T]} \vert \bu_{1}^{\prime} ( N_{\alpha}, t) \vert \right ] + C_{2}T \Delta t, \vspace{-0.2cm}
\end{equation}
where $C_{2} =  \sqrt{L/h}\;\left (\sup_{\substack{x \in (\alpha L, L) \\ t\in (0,T)}} \vert f^{\prime}(x,t)\vert+ \frac{\nu}{h^{2}}\sup_{t\in (0,T)}\vert\psi_{2}^{\prime}(t) \vert\right )$. \vspace{4pt}

Evaluate \eqref{error1b} with $j=N_{\alpha}$ and \eqref{error2b} with $j=N_{\beta,\alpha}$, then combine the two resulting inequalities (note that their right-hand sides do not depend on $m$) to obtain:  
\begin{equation}\label{twoinequal}\left\{
\begin{array}{rl}
\vert \be_{1,m+1}(N_{\alpha})\vert  & \leq\; \kappa (\alpha, \beta) \max_{0\leq l\leq M-1}  \vert \be_{1,l+1}(N_{\alpha})\vert + \widetilde{C} \Delta t,\vspace{0.2cm}\\
\vert \be_{2,m+1}(N_{\beta,\alpha})\vert  & \leq\; \kappa (\alpha, \beta) \max_{0\leq l\leq M-1} \vert \be_{2,l+1}(N_{\beta,\alpha})\vert + \widetilde{C} \Delta t,
\end{array}\right.
\end{equation}
where
\begin{align*} 
& \widetilde{C} = \sup_{t \in [0,T]} \vert \bu_{1}^{\prime} ( N_{\alpha}, t) \vert + \sup_{t \in [0,T]} \vert \bu_{2}^{\prime} (N_{\beta,\alpha}, t) \vert  + (C_{1}+C_{2})T.
\end{align*}
Note that to derive \eqref{twoinequal}, we have used the following equality: 
$$\frac{N_{\alpha}}{N_{1}+1} \left (\frac{N_{2}+1-(N_{\beta,\alpha})}{N_{2}+1}\right ) =\frac{N_{\alpha}}{N_{\beta}} \left (\frac{N+1-N_{\beta}}{N+1-N_{\alpha}}\right )= \frac{\alpha (1-\beta)}{\beta (1-\alpha)}=\kappa (\alpha, \beta). 
$$
Substituting \eqref{twoinequal} into \eqref{error1b} and \eqref{error2b}, we find that \vspace{-0.1cm}
\begin{equation} \label{prffinal}\left\{
\begin{array}{rcl}\Vert \be_{1,m+1}\Vert_{\infty} & \leq & \max_{0\leq l\leq M-1} \vert \be_{2,l+1}(N_{\beta,\alpha})\vert + \Delta t\sup_{t \in [0,T]} \vert \bu_{2}^{\prime} (N_{\beta,\alpha}, t) \vert + C_{1}T\Delta t\\
& \leq& \kappa (\alpha, \beta) \max_{0\leq l\leq M-1} \vert \be_{2,l+1}(N_{\beta,\alpha})\vert + \overline{C}_{1} \Delta t, \vspace{2pt}\\
\Vert \be_{2,m+1}\Vert_{\infty} &\leq &\max_{0\leq l\leq M-1} \vert \be_{1,l+1}(N_{\alpha})\vert + \Delta t\sup_{t \in [0,T]} \vert \bu_{1}^{\prime} ( N_{\alpha}, t) \vert +C_{2}T\Delta t \\
& \leq& \kappa (\alpha, \beta) \max_{0\leq l\leq M-1}  \vert \be_{1,l+1}(N_{\alpha})\vert + \overline{C}_{2} \Delta t,
\end{array} \right. \vspace{-0.2cm}
\end{equation}
where \vspace{-0.2cm}
$$\overline{C}_{1}=\widetilde{C}+\sup_{t \in [0,T]} \vert \bu_{2}^{\prime} (N_{\beta,\alpha}, t) \vert + C_{1}T, \quad \overline{C}_{2}=\widetilde{C}+\sup_{t \in [0,T]} \vert \bu_{1}^{\prime} ( N_{\alpha}, t) \vert +C_{2}T.$$
Since the terms on the right hand side of \eqref{prffinal} do not depend on $m$, we can deduce that \vspace{-0.1cm}
\begin{align*}
\Vert \be_{1,\cdot}\Vert_{\infty, T} & \leq \kappa (\alpha, \beta)  \Vert \be_{2,\cdot}\Vert_{\infty,T}+ \overline{C}_{1} \Delta t,\\
\Vert \be_{2,\cdot}\Vert_{\infty, T} & \leq \kappa (\alpha, \beta)  \Vert \be_{1,\cdot}\Vert_{\infty,T}+ \overline{C}_{2} \Delta t.
\end{align*}
Thus we have 
\begin{align*}
&\left (1-\kappa (\alpha, \beta) \right ) \left (\Vert \be_{1,\cdot}\Vert_{\infty, T} + \Vert \be_{2,\cdot}\Vert_{\infty, T}\right ) \leq {(\overline{C}_{1}+\overline{C}_{2})}  \Delta t,
\end{align*}
which gives us \eqref{semierr1}.
If the data is sufficiently smooth, the error will tend to zero as $\Delta t$ approaches $0$. 
\end{proof} 

The convergence of the   localized ETD2 method  can be proved using similar techniques. Denote by $\widehat{\be}_{i,m}$ the error between the exact solution to \eqref{semi1}-\eqref{semi2} and the fully discrete localized ETD2 solution \eqref{ETD2Multi}. We have the following results. \vspace{3pt}
%
%
%
%
\begin{theorem} \label{thrm:LETD2}
For sufficiently smooth data, the localized ETD2 method converges as $\Delta t$ tends to $0$:
$$  \Vert \widehat{\be}_{1,\cdot}\Vert_{\infty, T} + \Vert \widehat{\be}_{2,\cdot}\Vert_{\infty, T} \leq C (\Delta t)^{2},
$$
where $C$ is a constant depending on $T$, the  size of overlap, the mesh size $h$, $\bu_{1}^{\prime\prime}(N_{\alpha},t)$, $\bu_{2}^{\prime\prime}(N_{\beta,\alpha},t)$, the source term $f$ and the boundary data. 
\end{theorem}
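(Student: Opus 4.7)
My plan is to replicate the structure of the proof of Theorem~\ref{thrm:LETD1}, with the essential adjustment that, since ETD2 replaces $\bF_{i}(t_{m}+s)$ on $[t_{m},t_{m+1}]$ by its linear (rather than constant) interpolation, every local quadrature remainder will pick up an extra power of $\Delta t$, yielding $O((\Delta t)^{2})$ after summation over $M=T/\Delta t$ time steps. Concretely, I will subtract the ETD2 update \eqref{ETD2Multi} from the variation-of-constants formula to obtain error recursions of the form
\begin{equation*}
\widehat{\be}_{1,m+1} = \text{e}^{\Delta t \bA_{1}}\widehat{\be}_{1,m} + \int_{0}^{\Delta t}\text{e}^{(\Delta t-s)\bA_{1}}\bF_{1}\bigl(\mathcal{R}^{f}_{m}(s),\mathcal{R}^{\psi_{1}}_{m}(s),\mathcal{R}^{u_{2}}_{m}(s)\bigr)\,ds,
\end{equation*}
and similarly for $\widehat{\be}_{2,m+1}$, where $\mathcal{R}^{g}_{m}(s)$ denotes the pointwise difference between $g(t_{m}+s)$ and its linear interpolant on $[t_{m},t_{m+1}]$.

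For the smooth contributions, Taylor's theorem gives $|\mathcal{R}^{f}_{m}(s)|+|\mathcal{R}^{\psi_{1}}_{m}(s)|\leq C(\Delta t)^{2}(\sup|f''|+\sup|\psi_{1}''|)$, so the associated integral term is bounded by $C_{1}(\Delta t)^{3}$ after using the same operator bound $\|\text{e}^{t\bA_{1}}\|_{\infty}\leq\sqrt{L/h}$ employed in the ETD1 proof. The interface contribution requires an extra split: since the scheme uses $\bu_{2,m}(N_{\beta,\alpha})$ and $\bu_{2,m+1}(N_{\beta,\alpha})$ (rather than only $\bu_{2,m+1}(N_{\beta,\alpha})$ as in ETD1), I will write
\begin{equation*}
\mathcal{R}^{u_{2}}_{m}(s)= \Bigl[u_{2}(N_{\beta,\alpha},t_{m}+s)-\bigl(1-\tfrac{s}{\Delta t}\bigr)u_{2}(N_{\beta,\alpha},t_{m})-\tfrac{s}{\Delta t}u_{2}(N_{\beta,\alpha},t_{m+1})\Bigr]
\end{equation*}
\begin{equation*}
\qquad\qquad+\bigl(1-\tfrac{s}{\Delta t}\bigr)\widehat{\be}_{2,m}(N_{\beta,\alpha})+\tfrac{s}{\Delta t}\widehat{\be}_{2,m+1}(N_{\beta,\alpha}),
\end{equation*}
bounding the first bracket by $C(\Delta t)^{2}\sup_{t}|u_{2}''(N_{\beta,\alpha},t)|$ and treating the second as interface error contributions.

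Feeding these bounds into Corollary~\ref{coro:errorbound} (valid for ETD2 by Remark~\ref{rmk:LemmaETDboth}) in exactly the way \eqref{error1a}--\eqref{error1b} were derived produces
\begin{equation*}
|\widehat{\be}_{1,m+1}(j)|\leq \frac{j}{N_{1}+1}\max_{0\leq l\leq M}|\widehat{\be}_{2,l}(N_{\beta,\alpha})|+\overline{C}_{1}(\Delta t)^{2},
\end{equation*}
and an analogous estimate for $|\widehat{\be}_{2,m+1}(j)|$ using $(N_{2}+1-j)/(N_{2}+1)$ and $\max_{l}|\widehat{\be}_{1,l}(N_{\alpha})|$. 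Evaluating the first at $j=N_{\alpha}$ and the second at $j=N_{\beta,\alpha}$, substituting one into the other, and exploiting the identity $\frac{N_{\alpha}}{N_{\beta}}\cdot\frac{N+1-N_{\beta}}{N+1-N_{\alpha}}=\kappa(\alpha,\beta)$ gives the contraction
\begin{equation*}
(1-\kappa(\alpha,\beta))\bigl(\|\widehat{\be}_{1,\cdot}\|_{\infty,T}+\|\widehat{\be}_{2,\cdot}\|_{\infty,T}\bigr)\leq C(\Delta t)^{2},
\end{equation*}
which yields the theorem. The main obstacle I expect is the careful handling of the interface quadrature remainder $\mathcal{R}^{u_{2}}_{m}(s)$: unlike the ETD1 case where only $\bu_{2,m+1}(N_{\beta,\alpha})$ appears and a crude Taylor bound on $u_{2}'$ suffices, here one must cleanly separate the smooth second-order interpolation remainder from the discrete interface errors at both endpoints without letting a factor of $\Delta t$ slip (which would degrade the rate to first order); choosing the $\max$ over all $l$ rather than $l=m,m+1$ specifically is what absorbs this into the closed recursion above.
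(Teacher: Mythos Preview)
Your proposal is correct and follows essentially the same approach as the paper: both subtract the ETD2 update from the variation-of-constants formula, separate the interface contribution into the $O((\Delta t)^{2})$ linear-interpolation remainder of the exact trace (controlled by $\bu_{2}''(N_{\beta,\alpha},\cdot)$) plus the discrete interface errors $\widehat{\be}_{2,m}(N_{\beta,\alpha})$, $\widehat{\be}_{2,m+1}(N_{\beta,\alpha})$, and then invoke Corollary~\ref{coro:errorbound} together with Remark~\ref{rmk:LemmaETDboth} before closing with the same $\kappa(\alpha,\beta)$-contraction argument. The only cosmetic difference is that the paper simplifies by taking $f=0$ and $\psi_{1}=\psi_{2}=0$, whereas you keep the general data and bound the corresponding remainders $\mathcal{R}^{f}_{m}$, $\mathcal{R}^{\psi_{1}}_{m}$ via the operator estimate $\|\text{e}^{t\bA_{1}}\|_{\infty}\leq\sqrt{L/h}$ exactly as in the ETD1 proof.
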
 \vspace{4pt}

\begin{proof} We follow similar arguments as in Theorem \ref{thrm:LETD1} but skip some details. For simplicity, assume that $f=0$ and $\psi_{1}=\psi_{2}=0$, we use Taylor series twice with the remainder in integral form and write the exact solution, for instance, in $\Omega_{1}$ as follows:
\begin{align*}
\bu_{1}(t_{m+1}) 
= &\;  \text{e}^{\Delta t \bA_{1}} \bu_{1}(t_{m}) + \int_{0}^{\Delta t} \text{e}^{(\Delta t-s) \bA_{1}} \bF_{1}{(0, 0, \bu_{2}(N_{\beta,\alpha}, t_{m}))} ds \\
&+ \int_{0}^{\Delta t} \text{e}^{(\Delta t-s) \bA_{1}} \left [\frac{\bF_{1}{(0, 0, \bu_{2}(N_{\beta,\alpha}, t_{m+1}))}-\bF_{1}{(0, 0, \bu_{2}(N_{\beta,\alpha}, t_{m}))}}{\Delta t} \right ] s\, ds\\
& + \gamma_{1,m+1},
\end{align*}
for $m=0, \hdots, M-1$, where
\begin{align*} \gamma_{1,m+1} =& \int_{0}^{\Delta t} \text{e}^{(\Delta t-s) \bA_{1}} \left (\int_{0}^{\Delta t} (\Delta t-\tau)  \bF_{1}{(0, 0, \bu_{2}^{\prime \prime}(N_{\beta,\alpha}, t_{m}+\tau))} d\tau\right ) s \,  ds \\
&
+\int_{0}^{\Delta t} \text{e}^{(\Delta t-s) \bA_{1}} \int_{0}^{s} (s-\tau)  \bF_{1}{(0, 0, \bu_{2}^{\prime \prime}(N_{\beta,\alpha}, t_{m}+\tau))} \;d\tau \,  ds.
\end{align*}
The error between the exact solution and the localized, ETD2 solution~\eqref{ETD2Multi} satisfies:
\begin{align*}
\widehat{\be}_{1,m+1}=&\;\text{e}^{\Delta t \bA_{1}} \widehat{\be}_{1,m}+ \int_{0}^{\Delta t} \text{e}^{(\Delta t-s) \bA_{1}} \bF_{1}{(0, 0, \widehat{\be}_{2,m}(N_{\beta,\alpha}))} \\
&+ \int_{0}^{\Delta t} \text{e}^{(\Delta t-s) \bA_{1}} \left [\frac{\bF_{1} {(0, 0, \widehat{\be}_{2,m+1}(N_{\beta,\alpha}))} - \bF_{1}{(0, 0, \widehat{\be}_{2,m}(N_{\beta,\alpha}))}}{\Delta t}\right ] s \, ds \\
&+ \gamma_{1,m+1}. 
\end{align*}
Note that $\gamma_{1,m+1}(j), \, 1\leq j \leq N_{1},$ is bounded by $C_* (\Delta t)^{3}$ where $C_{*}$ depends on the supremum of $\bu_{2}^{\prime \prime}(N_{\beta,\alpha},t)$ for $t \in (0,T)$. Using Remark~\ref{rmk:LemmaETDboth} and Corollary~\ref{coro:errorbound}, we can  obtain a bound for $\widehat{\be}_{1,m}$ as follows:
$$ \vert \widehat{\be}_{1,m+1} (j) \vert \leq \frac{j}{N_{1}+1} \max_{0\leq l\leq M-1} \vert \widehat{\be}_{2,l+1} (N_{\beta,\alpha}) \vert + C_{1} T (\Delta t)^{2}, \quad 1 \leq j \leq N_{1},
$$
for some constant $C_1$. 
Similarly, one can derive a bound for $\widehat{\be}_{2,m}$. Following same arguments as in \eqref{twoinequal} and so on, we finally obtain 
$$ 
\left (1-\kappa (\alpha, \beta) \right ) \left (\Vert \widehat{\be}_{1,\cdot}\Vert_{\infty, T} + \Vert \widehat{\be}_{2,\cdot}\Vert_{\infty, T}\right ) \leq C T(\Delta t)^{2}, \vspace{-0.2cm}
$$
for some constant $C$ depending on the mesh size $h$, the supremums of $\bu_{1}^{\prime\prime}(N_{\alpha},t)$ and $\bu_{2}^{\prime\prime}(N_{\beta, \alpha},t)$ on $(0,T)$.  
\end{proof}

\subsection{Convergence of the Schwarz iterative solutions to the corresponding localized ETD solutions} 

We will  show in Theorem~\ref{thrm:M3.ETD1} that Method~2 converges at a similar linear rate as in the continuous problem (cf. Theorem~\ref{thrm:conv.cont}). The rate depends only on the size of overlap  but neither on the mesh size nor the time step size.  The convergence of Method~1 is obtained as a consequence of Theorem~\ref{thrm:M3.ETD1} (see Remark~\ref{rmk:convM1}). \vspace{4pt}
%
%
\begin{theorem} \label{thrm:M3.ETD1} 
The sequence of iterates $(\bu_{1}^{(k)}, \bu_{2}^{(k)})$ given by Method 2 (with ETD1 \eqref{M3-IETD1} $($or ETD2 {\eqref{M3-ETD2-u1}-\eqref{M3-ETD2-u2}}$)$ converges to the discrete solution $(\bu_{1}, \bu_{2})$ in \eqref{ETD1Multi} $($or \eqref{ETD2Multi}$)$ as $k\rightarrow \infty$:
$$ \| \bu_{1}^{(k)} - \bu_{1}\|_{\infty, T} + \| \bu_{2}^{(k)} - \bu_{2}\|_{\infty, T} \rightarrow 0, \; \; \text{as $k \rightarrow \infty$}.  
$$
In particular: 
\begin{align*}
\| \bu_{1}^{(2k+1)} - \bu_{1} \|_{\infty, T} &\leq  \left (\kappa(\alpha, \beta)\right )^{k}   \vert \bu_{2, \cdot}^{(0)}(N_{\beta,\alpha}) -\bu_{2, \cdot}(N_{\beta,\alpha}) \vert_{T}, \vspace{4pt}\\
\| \bu_{2}^{(2k+1)} - \bu_{2}  \|_{\infty, T} &\leq   \left (\kappa(\alpha, \beta)\right )^{k} \vert \bu_{1, \cdot}^{(0)}(N_{\alpha})-\bu_{1, \cdot}(N_{\alpha}) \vert_{T}. \vspace{-0.3cm}
\end{align*}\vspace{-0.2cm}
\end{theorem}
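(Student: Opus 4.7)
The plan is to derive a coupled two-step recursion at the interface nodes $j=N_{\alpha}$ and $j=N_{\beta,\alpha}$, and then iterate. Introduce the iteration errors $\pmb{d}_{i,m}^{(k)} := \bu_{i,m}^{(k)}-\bu_{i,m}$ for $i=1,2$. Subtracting the multidomain scheme \eqref{ETD1Multi} from the Method~2 ETD1 iteration \eqref{M3-IETD1}, each $\pmb{d}_{i,m+1}^{(k+1)}$ satisfies the ETD1 scheme with zero right-hand side $f$, zero outer boundary data, and zero initial condition, the only forcing entering through the interface component of $\bF_{i}$: $\tfrac{\nu}{h^{2}}\pmb{d}_{2,m+1}^{(k)}(N_{\beta,\alpha})$ appears in the last component of $\bF_{1}$, and $\tfrac{\nu}{h^{2}}\pmb{d}_{1,m+1}^{(k)}(N_{\alpha})$ in the first component of $\bF_{2}$. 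The ETD2 analogue \eqref{M3-ETD2-u1}--\eqref{M3-ETD2-u2} is structurally identical, since it is nothing but exact integration on $[t_{m},t_{m+1}]$ of the error ODE with the piecewise-linear interpolant of $\bF_{i}$ in place of $\bF_{i}$.

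Next I would apply the discrete maximum principle of Lemma~\ref{lmm:estimate} (extended to ETD1/ETD2 via Remark~\ref{rmk:LemmaETDboth}) to each error subproblem. Since the only nonzero ``boundary'' in the bound is the interface time sequence, whose piecewise-constant (ETD1) or piecewise-linear (ETD2) interpolant attains its supremum at a discrete node, one obtains, for every $m$ and every admissible $j$,
\begin{align*}
|\pmb{d}_{1,m+1}^{(k+1)}(j)| &\leq \tfrac{j}{N_{1}+1}\,|\pmb{d}_{2,\cdot}^{(k)}(N_{\beta,\alpha})|_{T},\\
|\pmb{d}_{2,m+1}^{(k+1)}(j)| &\leq \tfrac{N_{2}+1-j}{N_{2}+1}\,|\pmb{d}_{1,\cdot}^{(k)}(N_{\alpha})|_{T}.
\end{align*}
Evaluating at $j=N_{\alpha}$ and $j=N_{\beta,\alpha}$ and taking $|\cdot|_{T}$ on the left (the right-hand sides are $m$-free), then composing the two inequalities, the algebraic identity $\tfrac{N_{\alpha}}{N_{1}+1}\cdot\tfrac{N_{2}+1-N_{\beta,\alpha}}{N_{2}+1}=\kappa(\alpha,\beta)$ already verified in Theorem~\ref{thrm:LETD1} yields the two-step interface contraction $|\pmb{d}_{i,\cdot}^{(k+2)}|_{T}\leq\kappa(\alpha,\beta)\,|\pmb{d}_{i,\cdot}^{(k)}|_{T}$, hence $\kappa(\alpha,\beta)^{k}$-decay of the interface traces after $2k$ iterations.

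To upgrade from the interface bound to the stated $\|\cdot\|_{\infty,T}$ estimate, I would reuse the max-principle inequality above with the trivial bounds $j/(N_{1}+1)\leq 1$ and $(N_{2}+1-j)/(N_{2}+1)\leq 1$, producing $\|\pmb{d}_{1,\cdot}^{(2k+1)}\|_{\infty,T}\leq|\pmb{d}_{2,\cdot}^{(2k)}(N_{\beta,\alpha})|_{T}\leq\kappa(\alpha,\beta)^{k}|\pmb{d}_{2,\cdot}^{(0)}(N_{\beta,\alpha})|_{T}$, and symmetrically for $\pmb{d}_{2}$. The convergence of Method~1 (Remark~\ref{rmk:convM1}) then follows from the same argument applied at one time step, with $\bu_{i,m}$ serving as a common exact restart so that no error accumulates in $m$. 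The main obstacle I anticipate is the careful justification that Lemma~\ref{lmm:estimate}'s stationary barrier argument still applies when its ``boundary data'' is a time-varying discrete sequence instead of a continuous function: the barrier built from the constant values $|\psi_{1}|_{T}=0$ and $|\psi_{2}|_{T}=|\pmb{d}_{2,\cdot}^{(k)}(N_{\beta,\alpha})|_{T}$ must dominate the iterate error in each subinterval $[t_{m},t_{m+1}]$. This holds because the step-constant (ETD1) and piecewise-linear (ETD2) interpolants are bounded in magnitude by the discrete max of their nodal values, which is exactly the quantity appearing on the right-hand side of the claimed estimates; once this is in place, the remaining arithmetic is just the identity chain already used in Theorem~\ref{thrm:LETD1}.
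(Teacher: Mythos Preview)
Your proposal is correct and follows essentially the same route as the paper: define the iteration errors, observe they satisfy the ETD scheme with zero data except for the interface forcing, apply Lemma~\ref{lmm:estimate} (via Remark~\ref{rmk:LemmaETDboth}) to get the linear-in-$j$ bounds, compose them at $j=N_{\alpha}$ and $j=N_{\beta,\alpha}$ using the identity $\tfrac{N_{\alpha}}{N_{1}+1}\cdot\tfrac{N_{2}+1-N_{\beta,\alpha}}{N_{2}+1}=\kappa(\alpha,\beta)$ to obtain the two-step contraction, and finally absorb the $j$-dependent prefactor by $1$ to pass to the full $\|\cdot\|_{\infty,T}$ estimate. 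The ``obstacle'' you flag---that the barrier argument of Lemma~\ref{lmm:estimate} must accommodate discrete time-varying interface data---is precisely what Remark~\ref{rmk:LemmaETDboth} handles, and your justification via the piecewise-constant/linear interpolant being bounded by the nodal maximum is exactly the point.
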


\begin{proof} 
Define the errors at each iteration: 
$$\bw_{1,m}^{(k+1)} = \bu_{1,m}^{(k+1)} - \bu_{1,m}, \quad \bw_{2,m}^{(k+1)} = \bu_{2,m}^{(k+1)} - \bu_{2,m},$$ 
that satisfy the following equations:  for $m=0,1, \hdots, M-1,$ 
\begin{enumerate}
\item[i)] if ETD1 is used:
\begin{equation*}
\begin{array}{rcl}
\bw_{1,m+1}^{(k+1)}&=& \text{e}^{\Delta t \bA_{1}} \bw_{1,m}^{(k+1)} + \int_{0}^{\Delta t} \text{e}^{(\Delta t-s) \bA_{1}} \bF_{1}(0,0,\bw^{(k)}_{2,m+1}(N_{\beta,\alpha})), \vspace{3pt} \\
\bw_{2,m+1}^{(k+1)}&=& \text{e}^{\Delta t \bA_{2}} \bw_{2,m}^{(k+1)} + \int_{0}^{\Delta t} \text{e}^{(\Delta t-s) \bA_{2}} \bF_{2}(0,\bw^{(k)}_{1,m+1}(N_{\alpha}),0), 
\end{array}
\end{equation*}
\item[ii)] if ETD2 is used:
\begin{equation*}
\begin{array}{ll}
\bw_{1,m+1}^{(k+1)}=& \text{e}^{\Delta t \bA_{1}} \bw_{1,m}^{(k+1)} \vspace{3pt}\\
& + \int_{0}^{\Delta t} \text{e}^{(\Delta t-s) \bA_{1}} \bigg [ \frac{\bF_{1}(0,0,\bw^{(k)}_{2,m+1}(N_{\beta,\alpha})) - \bF_{1}(0,0,\bw^{(k)}_{2,m}(N_{\beta,\alpha}))}{\Delta t} s \\
&\qquad\qquad\qquad\qquad + \bF_{1}(\bw^{(k)}_{2,m}(0,0,N_{\beta,\alpha})) \bigg ] \, ds, \vspace{4pt} \\
\bw_{2,m+1}^{(k+1)}=& \text{e}^{\Delta t \bA_{2}} \bw_{2,m}^{(k+1)} \vspace{3pt}\\
&  + \int_{0}^{\Delta t} \text{e}^{(\Delta t-s) \bA_{2}} \bigg [ \frac{\bF_{2}(0,\bw^{(k)}_{1,m+1}(N_{\alpha}),0) - \bF_{2}(0,\bw^{(k)}_{1,m}(N_{\alpha}),0)}{\Delta t} s \\
&\qquad\qquad\qquad\qquad + \bF_{2}(0,\bw^{(k)}_{1,m}(N_{\alpha}),0) \bigg ] \, ds.
\end{array}
\end{equation*}
\end{enumerate}
For both cases, the initial conditions are $\bw_{1,0}^{(k+1)} = \bw_{2,0}^{(k+1)}=\pmb{0}$. 
By Lemma \ref{lmm:estimate} and Remark~\ref{rmk:LemmaETDboth}, we have that \vspace{-0.2cm}
\begin{align*}
\vert \bw_{1,m}^{(k+1)}(j) \vert & \leq \frac{j}{N_{1}+1} \vert \bw_{2, \cdot}^{(k)}(N_{\beta,\alpha}) \vert_{T}, \quad1\leq j \leq N_{\beta}-1,\\
\vert \bw_{2,m}^{(k+1)}(j) \vert & \leq \frac{N_{2}+1-j}{N_{2}+1} \vert \bw_{1, \cdot}^{(k)}(N_{\alpha}) \vert_{T},  \quad 1\leq j \leq N-N_{\alpha}, 
\end{align*}
from which we deduce that (as in \cite[Lemma 2.7]{GanderStuart98} and \eqref{twoinequal})
\begin{equation*}
\begin{array}{rcl}
\vert \bw_{1, \cdot}^{(2k)}(N_{\alpha} ) \vert_{T} &\leq&  \left (\kappa(\alpha, \beta)\right )^{k} \vert \bw_{1, \cdot}^{(0)}(N_{\alpha}) \vert_{T}, \vspace{5pt}\\
\vert \bw_{2, \cdot}^{(2k)}(N_{\beta,\alpha}) \vert_{T} &\leq&  \left (\kappa(\alpha, \beta)\right )^{k} \vert \bw_{2, \cdot}^{(0)}(N_{\beta,\alpha}) \vert_{T}.
\end{array} 
\end{equation*}
Using again Lemma \ref{lmm:estimate} and these inequalities we finally obtain \vspace{-0.2cm}
\begin{align*}
\| \bw_{1}^{(2k+1)}\|_{\infty, T} &\leq \vert \bw_{2, \cdot}^{(2k)}(N_{\beta,\alpha})\vert_{T}  \leq \left (\frac{\alpha (1-\beta)}{\beta (1-\alpha)}\right )^{k} \vert \bw_{2, \cdot}^{(0)}(N_{\beta,\alpha}) \vert_{T}. 
\end{align*}
A similar result can be proved for $\bw_{2}$. 
\end{proof} \vspace{3pt}

\begin{remark} \label{rmk:convM1}
Method~1 can be regarded as Method~2 with only one time step $T=\Delta t$. Consequently, the convergence of Method~1 is straightforward from Theorem~\ref{thrm:M3.ETD1}. Moreover, according to the super-linear convergence of the continuous Schwarz waveform relaxation method for short time intervals (see Theorem~\ref{thrm:superlinear}), one would expect that the convergence rate of Method~1 would depend also on the time step size $\Delta t$. We shall verify this numerically when we study the convergence of both methods versus the time step size in the next section.
\end{remark} \vspace{3pt}

\begin{remark}  \label{rmk:higherdimensions}
For ease of understanding, we consider the one dimensional case and derive explicit formulas for the constant involved in the convergence of the fully discrete solutions and for the convergence rates of the iterative solutions. The analysis presented above can be extended to higher dimensional problems using again the maximum principle and with $\kappa(\alpha, \beta)$ being replaced by some $\mathfrak{K}(\delta)<1$ depending on the size of overlap $\delta$ (see \cite{GanderZhao02} for the case of continuous problems). We shall present numerical results for one and two dimensional examples in the following section. 
\end{remark}
%
%
%
%
\section{Numerical results} \label{sec:NumRe}

In this section we numerically study   convergence behavior and accuracy of the localized ETD  algorithms presented in Section~\ref{sec:DDalgorithms}. 
In Subsection~\ref{subsec:zero.test}, the 1D error equation (with zero solution) is considered to investigate the dependence of the convergence speed of  Schwarz 
iteration in {Method~1 (the iterative, localized ETD) and Method~2 (the global-in-time, iterative, localized ETD)} on the size of overlap, on the time step size and on the length of the time interval when the domain is decomposed into two overlapping subdomains. We also show convergence for the case with many subdomains. In Subsection~\ref{subsec:anal.test} we consider a 1D example with an analytical solution and verify the temporal accuracy of the multidomain localized ETD solutions. Finally, we present numerical results for a 2D test case in Subsection~\ref{subsec:2D}.  

\subsection{The 1D error equation for testing the convergence of Schwarz iteration} \label{subsec:zero.test}

The spatial domain $\Omega=[0,2]$ is split into two non-overlapping subdomains $\widetilde{\Omega}_{1}=[0, 1]$ and $\widetilde{\Omega}_{2}=[1, 2]$ with an interface $\Gamma=\{{x~=~1}\}$. We enlarge each $\widetilde{\Omega}_{i}$ a distance $\delta \in (0,1)$ to obtain overlapping subdomains $\Omega_{1}~=~[0, 1+\delta]$ and $\Omega_{2}=[1-\delta, 2]$. The overlap size is equal to $2 \delta$ and will be chosen to be proportional to the mesh size. In order to study the convergence behavior of the two methods, we consider the error equation with a zero solution, i.e. we solve the model problem with a zero source term, a zero initial condition and homogeneous Dirichlet boundary conditions.  We start the iteration with a random initial guess on the interfaces between subdomains. In particular, for Method~1 and at the time step $t_{m} \, (m\geq 1)$, the initial interface guess values are $\textstyle\bu_{1,m}^{(0)}(N_{\alpha})$ and $\textstyle\bu_{2,m}^{(0)}(N_{\beta, \alpha})$, while for Method~2 the initial guess consists of two vectors of size $M$, $\textstyle\bu_{1,\cdot}^{(0)}(N_{\alpha})$ and $\textstyle\bu_{2,\cdot}^{(0)}(N_{\beta, \alpha})$. All the components are chosen randomly in the interval $(0,1)$. \vspace{4pt}

At each iteration we compute the errors in $L^{\infty}(\Omega)$-norm and in $L^{\infty}(0,T, L^{\infty}(\Omega))$-norm for Method 1 and Method 2 respectively. Note that to show the error reduction, we shall normalize the errors at each iteration by the error of the first iteration. \vspace{5pt}

\paragraph{\textbf{Convergence vs. different overlap sizes}}

We fix $T = 1$, $h = 2/256 \approx 0.0078$,  $\Delta t=0.01$, and take various  $\delta \in \{ h, 2 h, 4 h, 8 h, 16 h \}$. To see the effect of the overlap size on the convergence rate, we plot the normalized errors in logarithmic scale at each Schwarz iteration for different sizes of the overlap. For Method~1, the errors for the first time level $t=\Delta t$ are shown Figure~\ref{fig:M2.overlap} (the numbers of iterations for the following time levels are usually smaller than the first level, but their convergence behavior is similar). Clearly, the larger the size of overlap, the faster the convergence. Moreover, the errors decay quite faster if one uses the  localized ETD2  instead of the localized  ETD1 (by a factor of nearly 2). 

\begin{figure}[!htbp]
\centering
\begin{minipage}{0.45 \linewidth}
\centering
\includegraphics[scale=0.24]{./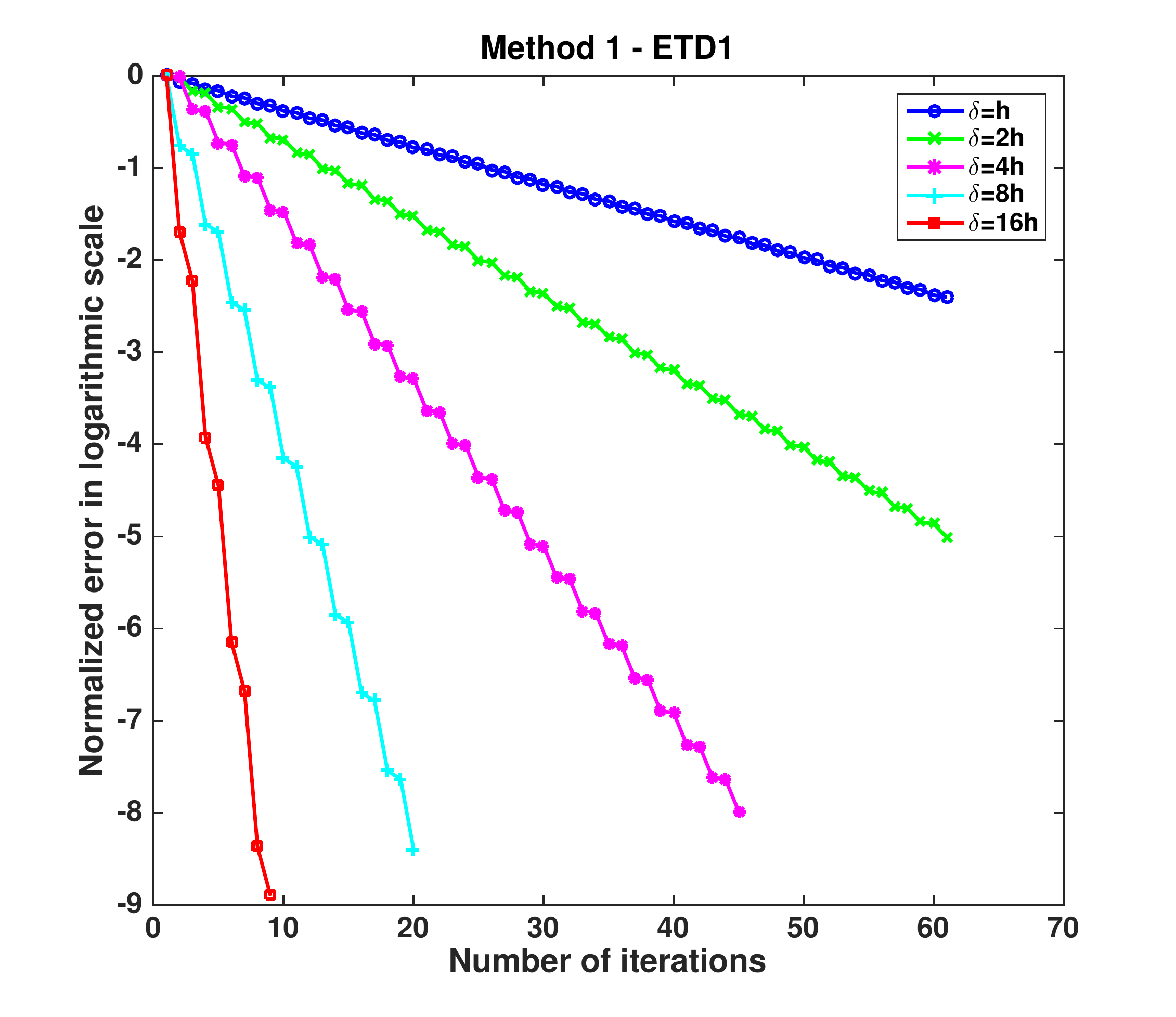}
\end{minipage} \hspace{5pt}
\begin{minipage}{0.45 \linewidth}
\centering
\includegraphics[scale=0.24]{./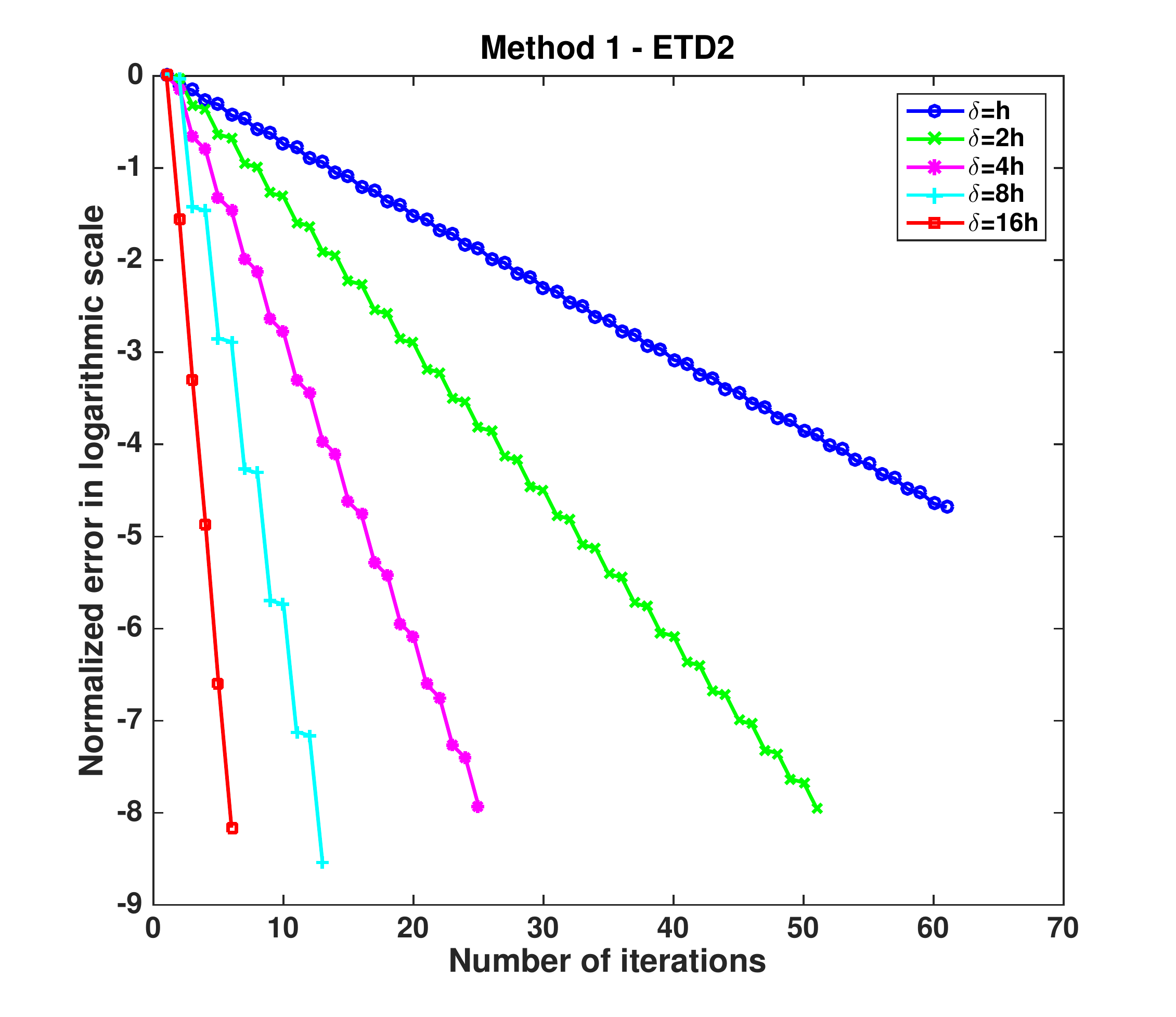}
\end{minipage}
\caption{Decay curves of the normalized $L^{\infty}(\Omega)$ errors of Method 1 at $t=\Delta t$ for different sizes of overlap, with the  ETD1 (left) or  the ETD2 (right). } \label{fig:M2.overlap} \vspace{-0.3cm}
\end{figure}

For Method~2, the errors over the whole time interval is presented in Figure~\ref{fig:M3.overlap}. The number of Schwarz iterations is for the whole time interval, not at each time level as in Method~1. We observe that the size of overlap  has a profound effect in this case. However, we do not observe a significant difference between the localized ETD1 and the localized ETD2 in terms of number of iterations required to obtain similar error reduction. 

\begin{figure}[!htbp]
\centering
\begin{minipage}{0.45 \linewidth}
\centering
 \includegraphics[scale=0.24]{./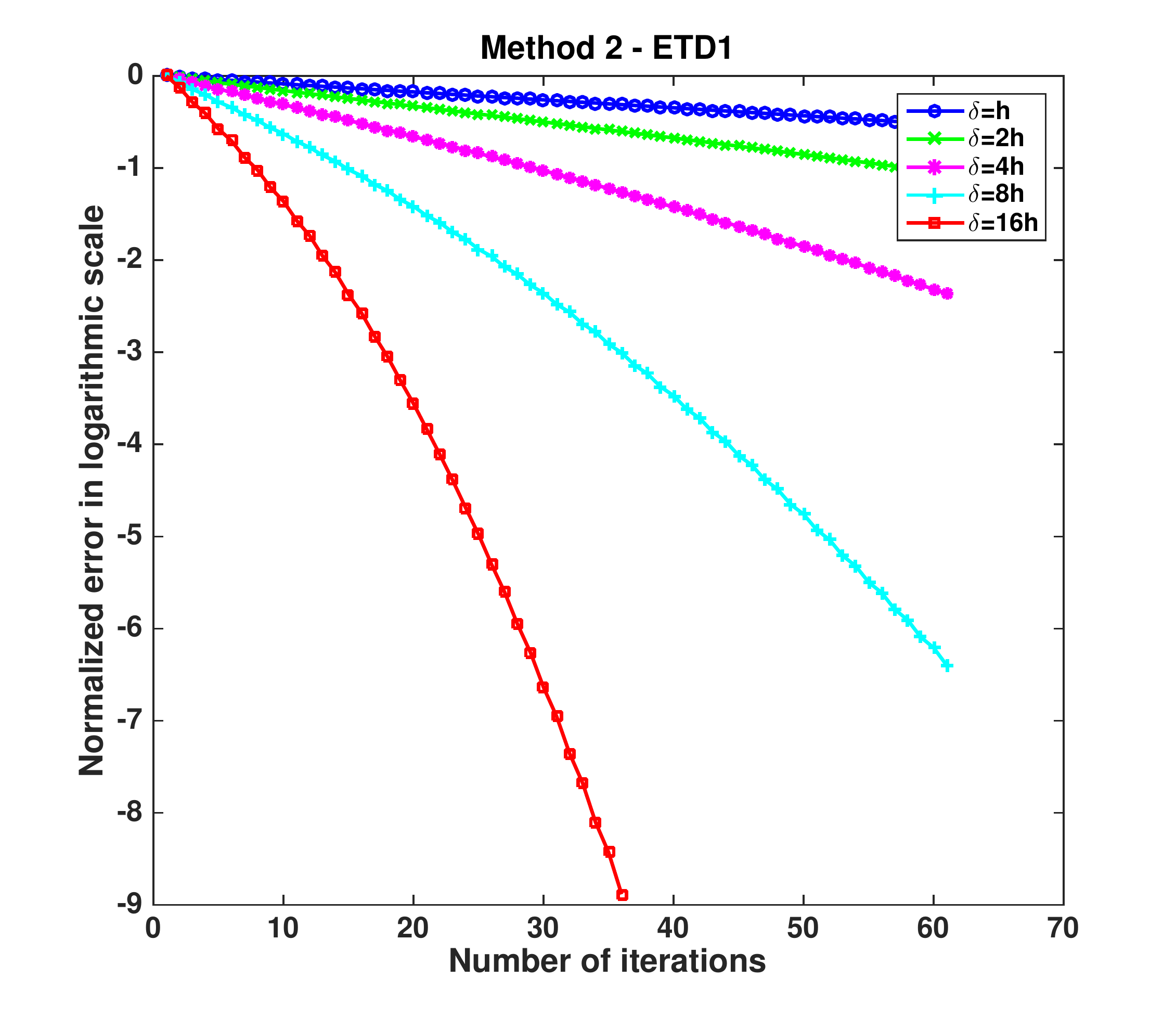}
\end{minipage} \hspace{5pt}
\begin{minipage}{0.45 \linewidth}
\centering
\includegraphics[scale=0.24]{./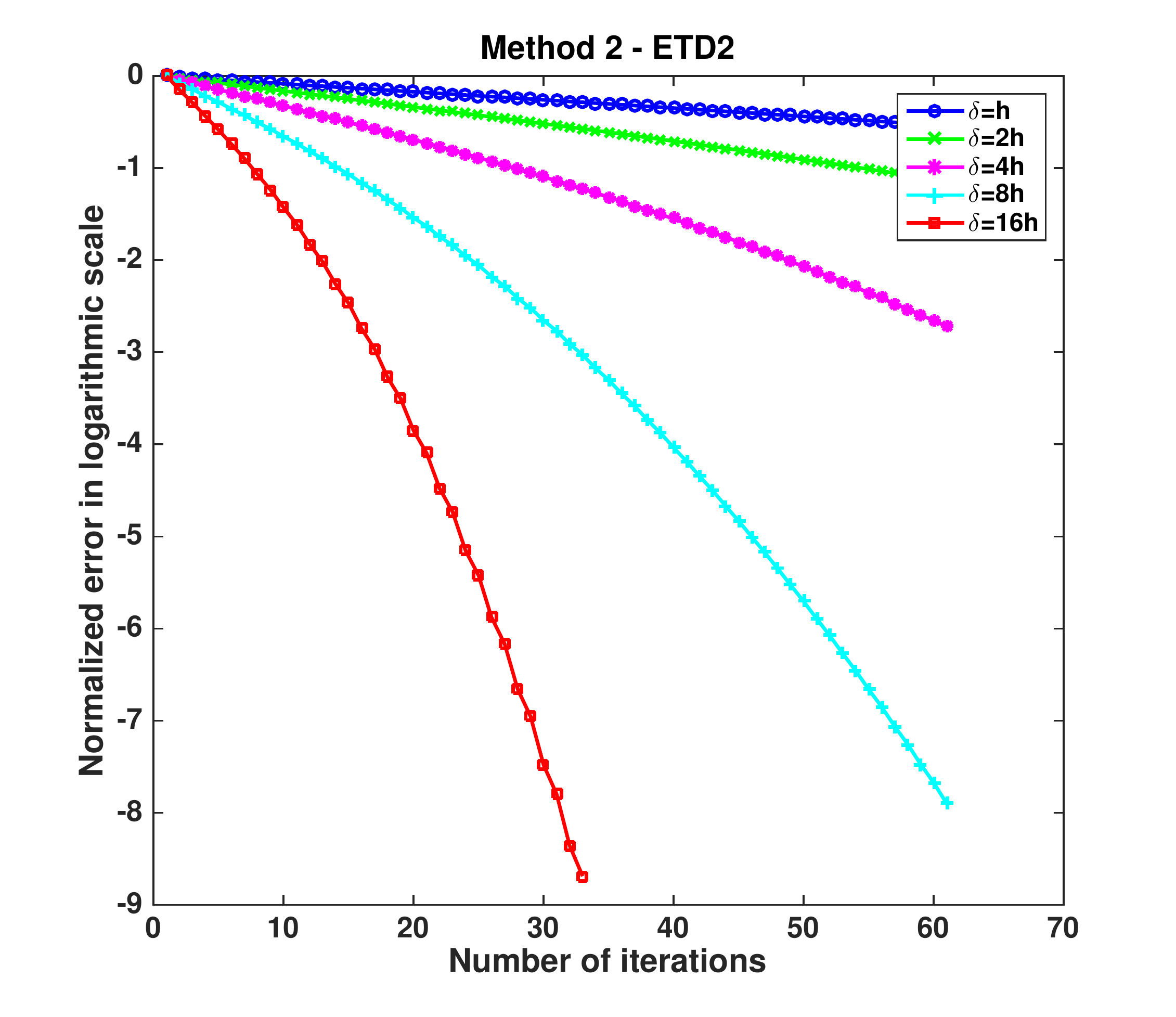}
\end{minipage}
\caption{Decay curves of the normalized $L^{\infty}(0,T, L^{\infty}(\Omega))$ errors of Method 2 over $[0,T]$ for different sizes of overlap, with the  ETD1 (left) or  the ETD2 (right). } \label{fig:M3.overlap} \vspace{-0.6cm}
\end{figure}

In Table~\ref{tab:rate}, we compare theoretical and simulated  decay rates of the {normalized} errors $\textstyle \frac{\vert \bw_{1}^{(2k)}(N_{\alpha}, \Delta t)\vert}{\vert \bw_{1}^{(0)}(N_{\alpha}, \Delta t)\vert}$ for Method 1 and $\textstyle\frac{\vert \bw_{1, \cdot}^{(2k)}(N_{\alpha})\vert_{T}}{\vert \bw_{1, \cdot}^{(0)}(N_{\alpha})\vert_{T}}$ for Method 2 {with respect to the number of iterations} for different  sizes of overlap. We see that the numerical rates of Method~2 are quite consistent with the theory, while for Method~1, the error decays at a linear rate but much faster than theoretical prediction. For evolution problems, the space domain decomposition behaves differently from the case of elliptic problems and one should take into account also the effect of the time step. The next results further confirm this effect. 

\begin{table}[!htbp]
\centering
\scriptsize
\setlength{\extrarowheight}{3pt}
\begin{tabular}{|c | c | c | c| c| c | }
\hline 
\multicolumn{2}{|c|}{Method}	& $\delta = h$	& $\delta = 2h$	& $\delta = 4h$	& $\delta = 8h$ 	\\
\hline
\hline
\multicolumn{2}{|c|}{Theoretical rate $\frac{\alpha(1-\beta)}{\beta(1-\alpha)}$} & $0.97$ & $0.94$ & $0.88$ & $0.78$  \\ \hline 
\multirow{2}{*}{Method 1} & ETD1				 & $0.91$ &  $0.83$ & $0.66$ & $0.38$ 
\\ \cline{2-6} 
											& ETD2 			& $0.84$ & $0.69$ & $0.47$ & $0.20$ 
\\ \hline
\multirow{2}{*}{Method 2} & ETD1 & $0.97$ & $0.96$ & $0.92$ & $0.80$ 
\\ \cline{2-6}
& ETD2 & $0.98$ & $0.96$ & $0.92$ & $0.76$
\\ \hline
\end{tabular}
\caption{Theoretical and simulated decay rates of the normalized errors for the two methods.} \label{tab:rate}  \vspace{-0.6cm}
\end{table}

\paragraph{\textbf{Convergence vs. different time step sizes}}

We fix the size of overlap with $\delta = 8 h$, the final time $T=1$ and take  various $\Delta t \in \{ 0.2, 0.1, 0.05, 0.025, 0.0125\}$. We show the error evolution curves for different time step sizes in Figure~\ref{fig:M2.dt} (Method 1 in which the normalized errors are computed at the first time level) and Figure~\ref{fig:M3.dt} (Method 2 where the normalized errors are computed in the whole time interval).  For Method~1, it is easy to find that the convergence is very sensitive to the time step size - the smaller the time step, the faster the rate; again, the error decays much  faster in the case of the iterative localized ETD2 than the iterative  localized ETD1 (by a factor of 3 now). 
For Method~2, however, the results show that it is quite independent of the time step, especially when the ETD2 is used. Hence, one can use large time steps without increasing significantly the number of iterations. In addition, the ETD2 always gives much smaller errors than the ETD1 using the same number of iterations. 

\begin{figure}[!htbp]
\centering
\begin{minipage}{0.45 \linewidth}
\centering
 \includegraphics[scale =0.25]{./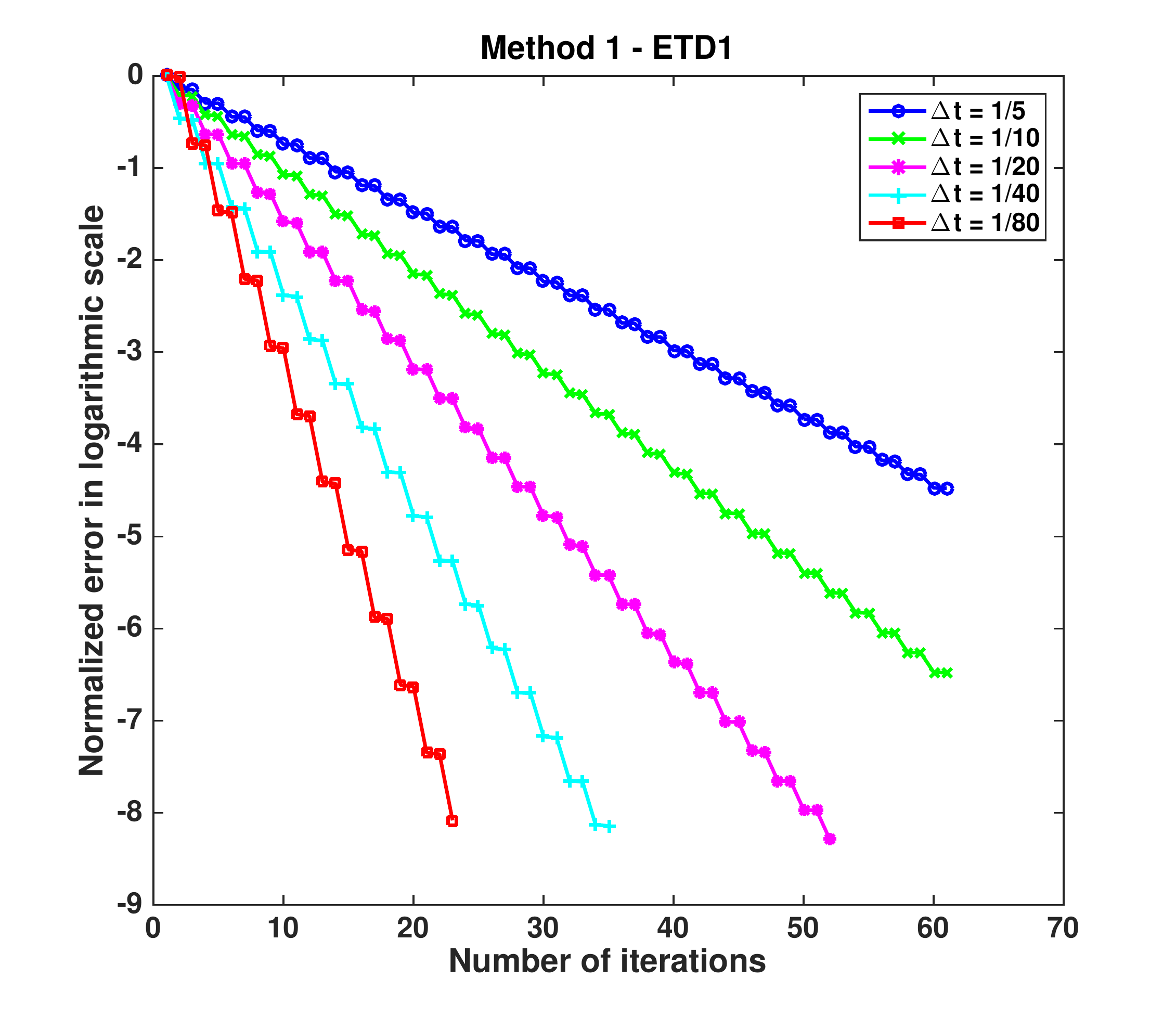}
\end{minipage} \hspace{5pt}
\begin{minipage}{0.45 \linewidth}
\centering
\includegraphics[scale=0.24]{./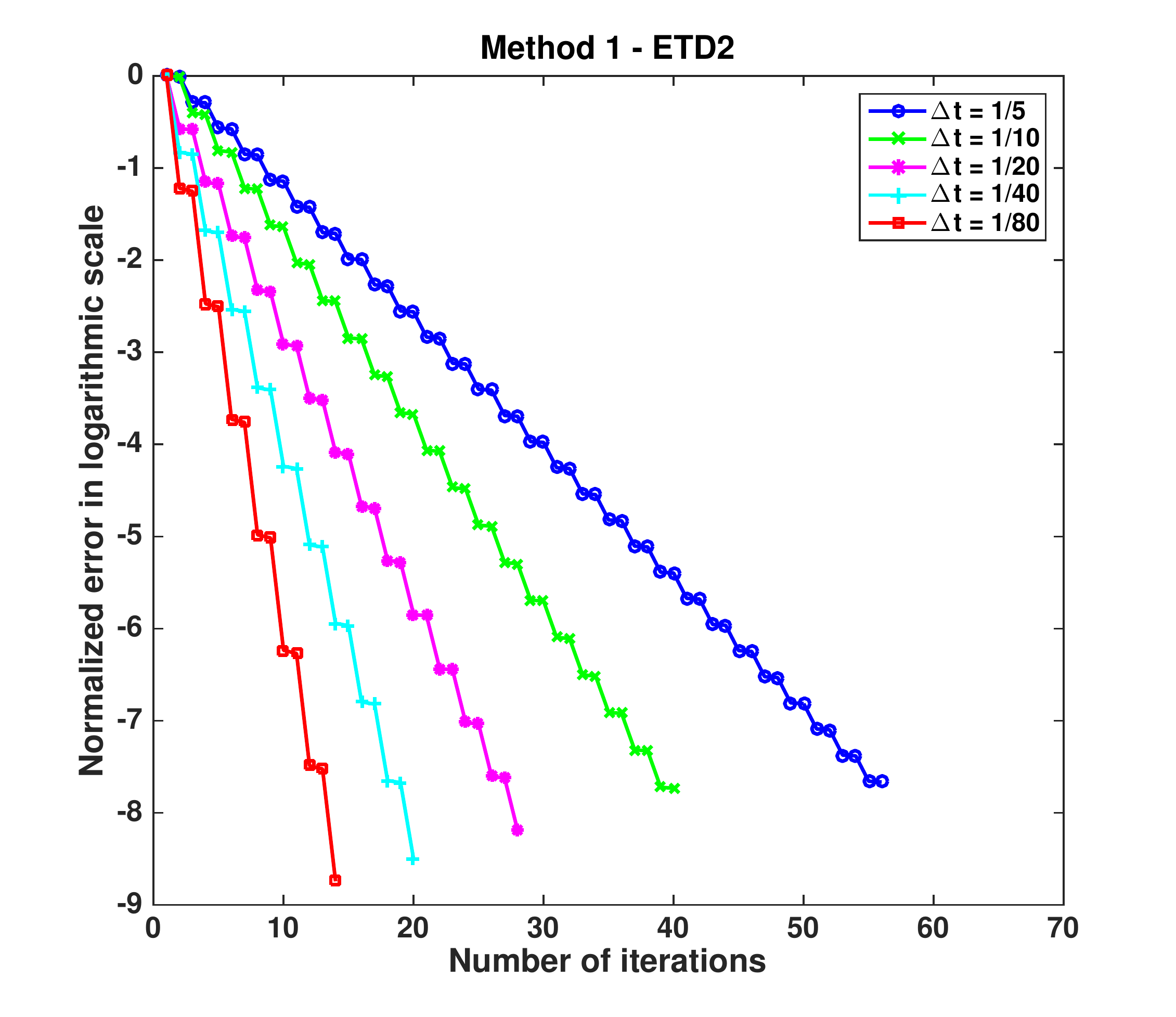}
\end{minipage}
\caption{Decay curves of the normalized $L^{\infty}(\Omega)$ errors of Method 1 at the first time level $t=\Delta t$ for different time step sizes, with the  ETD1 (left) or  the ETD2 (right). } \label{fig:M2.dt} \vspace{-0.6cm}
\end{figure}
\begin{figure}[!htbp]
\centering
\begin{minipage}{0.45 \linewidth}
\centering
 \includegraphics[scale=0.24]{./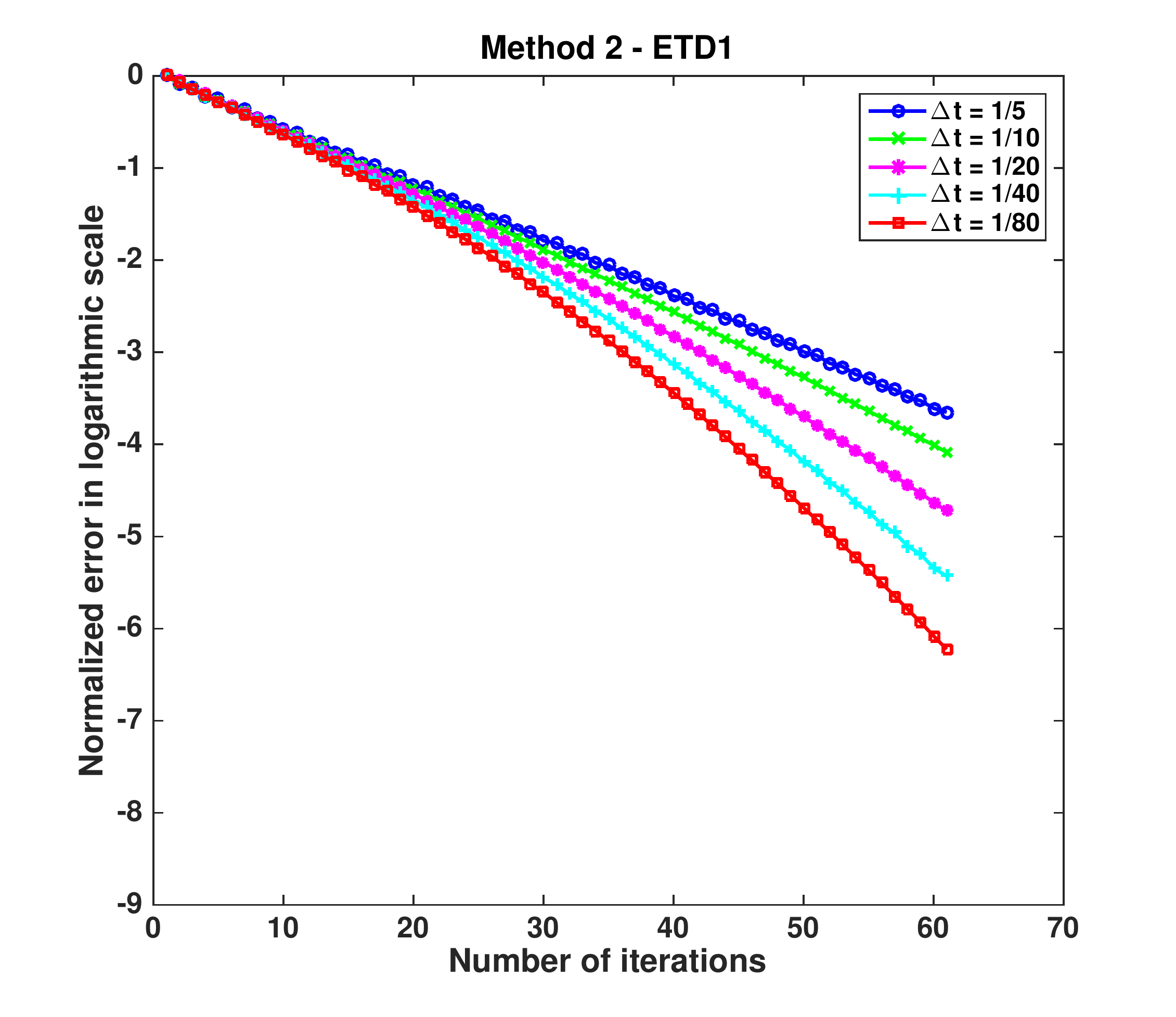}
\end{minipage} \hspace{5pt}
\begin{minipage}{0.45 \linewidth}
\centering
\includegraphics[scale=0.24]{./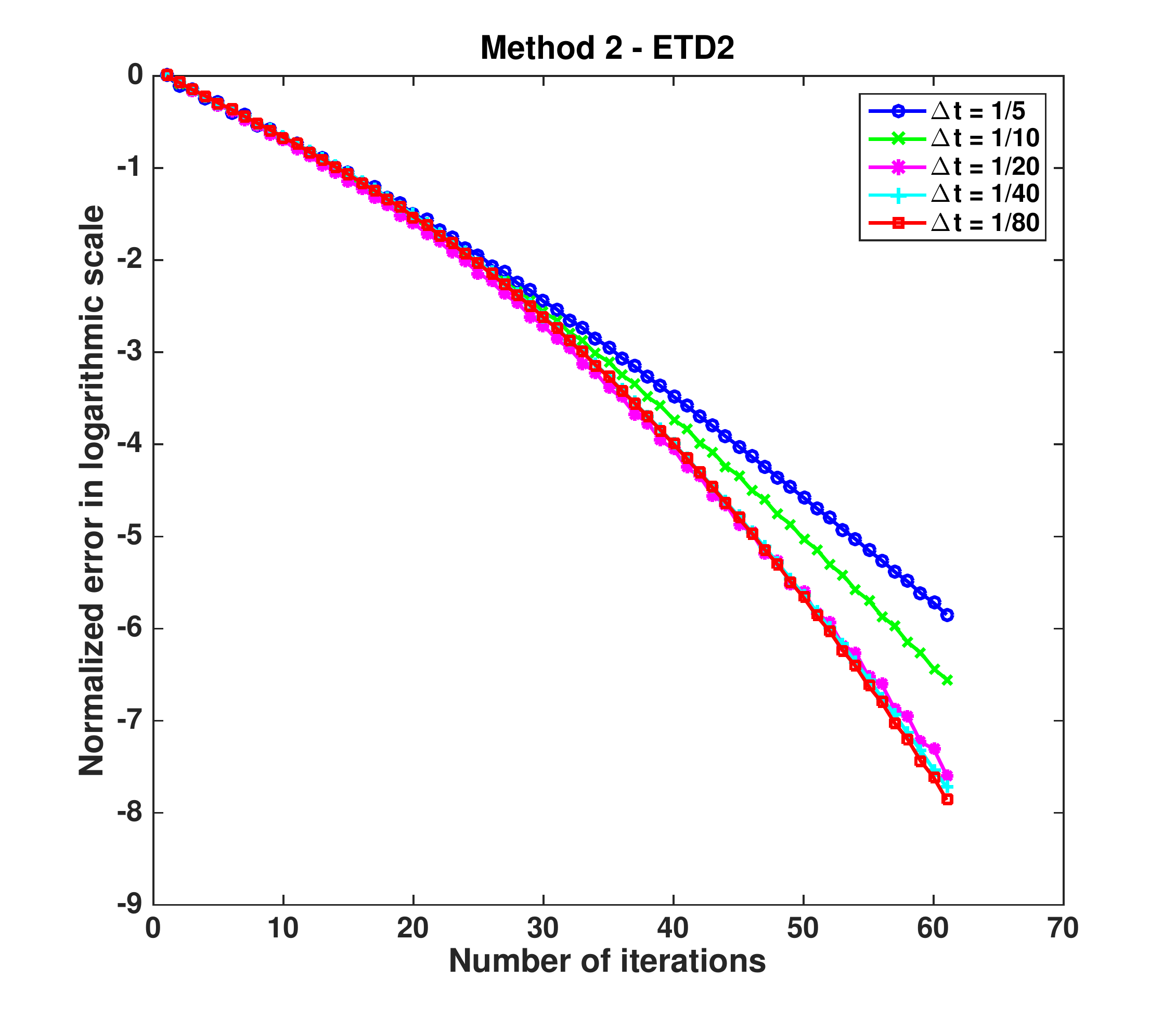}
\end{minipage}
\caption{Decay curves of the normalized $L^{\infty}(0,T, L^{\infty}(\Omega))$ errors of Method 2 over $[0,T]$ for different time step sizes, with the  ETD1 (left) or  the ETD2 (right).} \label{fig:M3.dt} \vspace{-0.6cm}
\end{figure}

\paragraph{\textbf{Convergence vs. different $T$}}

To see the super-linear convergence regime of Schwarz iteration of Method 2, we fix the  overlap size $\delta = 8h$ and the time step size $\Delta t=0.01$ and 
show the error evolution curves for different $T$ $\in$ $\{ 0.25, 0.5, 1, 2, {4}\}$ in Figure \ref{fig:M3.dt2}.  As predicted by the theory, if the time interval becomes larger, the convergence rate becomes linear. To take advantage of the super-linear convergence when a long time interval $[0,T]$ is considered, one should first partition $[0,T]$ into sub-intervals of smaller sizes, called time windows, and then perform Schwarz iteration on each time window (successive time windows do not overlap in time). In addition, for the global-in-time approach, it seems that the ETD2 and ETD1 have quite similar decay rates along the iterations.
\begin{figure}[!htbp]
\centering
\begin{minipage}{0.45 \linewidth}
\centering
 \includegraphics[scale=0.24]{./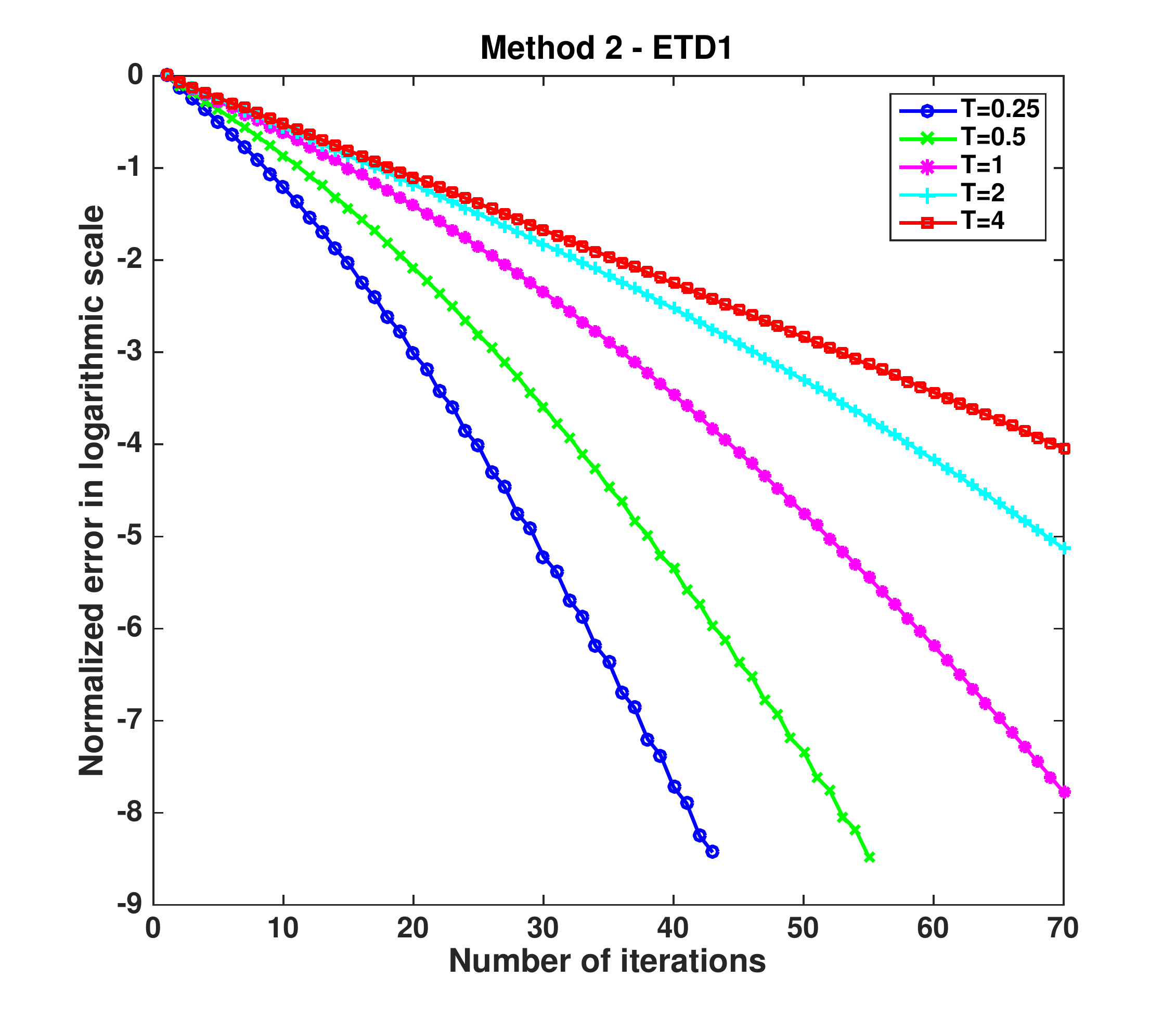}
\end{minipage} \hspace{5pt}
\begin{minipage}{0.45 \linewidth}
\centering
\includegraphics[scale=0.24]{./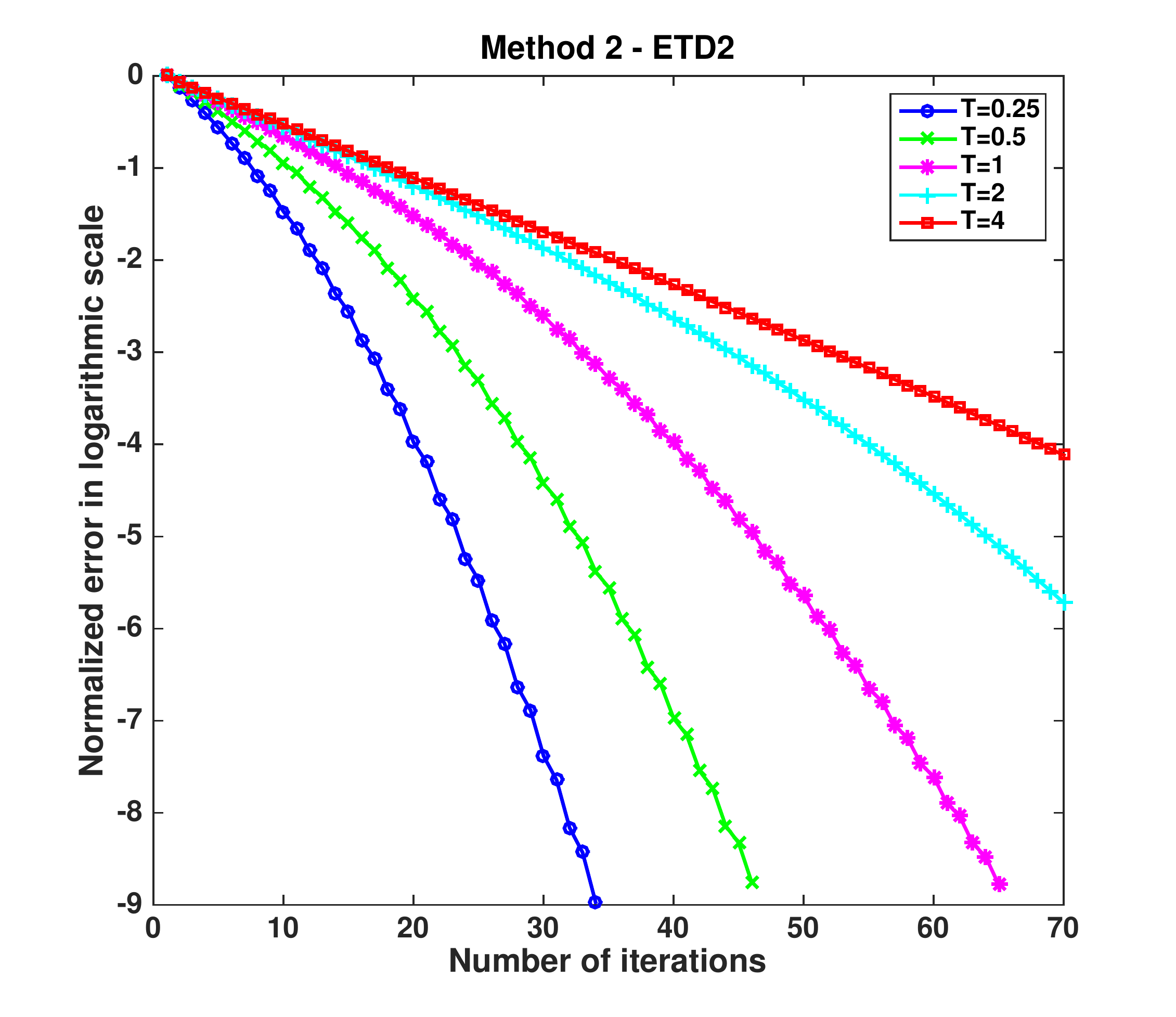}
\end{minipage}
\caption{Decay curves of the normalized $L^{\infty}(0,T, L^{\infty}(\Omega))$ errors of Method 2 over $[0,T]$  for different  $T \in \{ 0.25, 0.5, 1, 2, {4} \}$, with the  ETD1 (left) or  the ETD2 (right). } \label{fig:M3.dt2} \vspace{-0.6cm}
\end{figure}

\paragraph{\textbf{Convergence vs different numbers of subdomains}}

The spatial domain $\Omega=[0,2]$ is split into $P$ non-overlapping uniform subdomains $\widetilde{\Omega}_{i}$. Then each boundary point of $\widetilde{\Omega}_{i}$ interior to the domain $\Omega$ is enlarged by a distance $\delta \in (0,1)$ to form overlapping subdomains $\Omega_{i}$ with a uniform size of overlap equal to $2\delta$. We fix $T = 0.25$, $\Delta t=0.01$, $\delta=4h$ with $h = 2/512 \approx 0.0039$ in this test. We increase the number of subdomains and see {its effects on} the convergence speed of the Schwarz iteration. The results of error decay curves are shown in 
Figures~\ref{fig:numsubM2} and \ref{fig:numsubM3} for $P\in \{2,4,8,16\}$. 
We see that the convergence deteriorates as the number of subdomains increases, and the use of ETD2 helps reduce this deterioration. Note that this is a well-known behavior of domain decomposition methods and a coarse mesh  then often can be additionally used to help obtain convergences independence of the number of subdomains  \cite{ChanMathew94}.

\begin{figure}[!htbp]
\centering
\begin{minipage}{0.45 \linewidth}
\centering
\includegraphics[scale=0.24]{./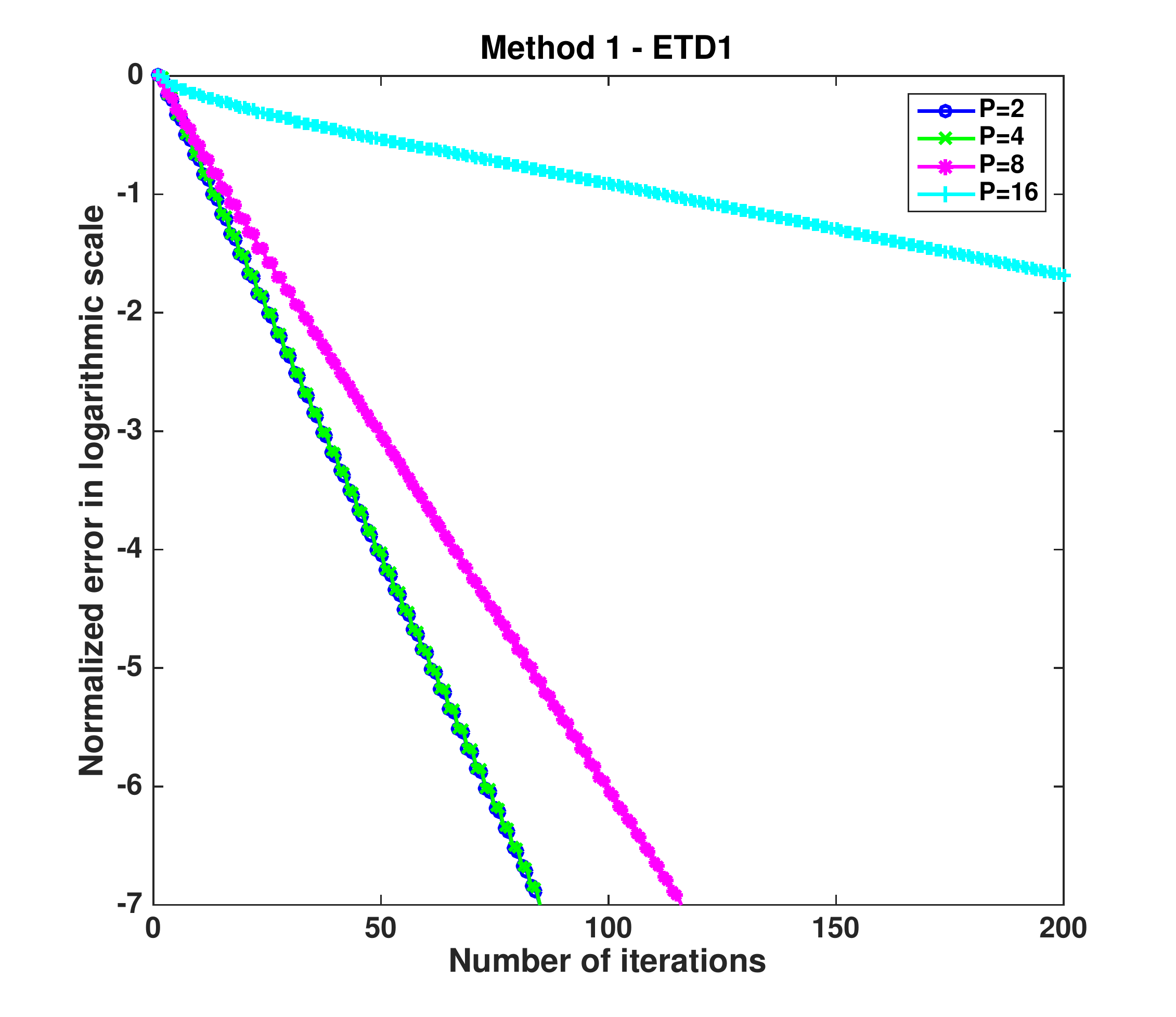}
\end{minipage} \hspace{5pt}
\begin{minipage}{0.45 \linewidth}
\centering
\includegraphics[scale=0.24]{./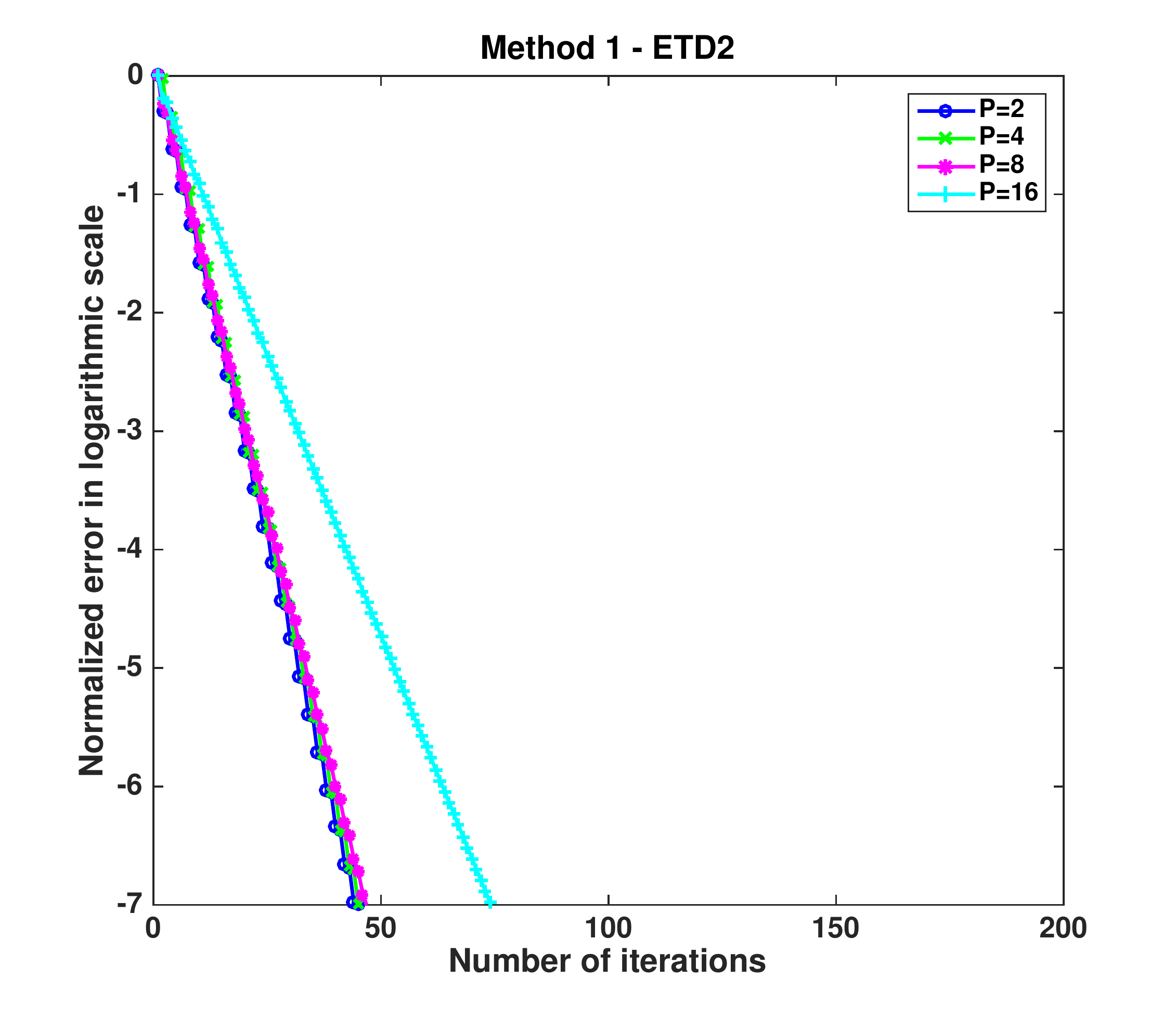}
\end{minipage}
\caption{Decay curves of the normalized  $L^{\infty}(\Omega)$ errors of Method 1 at $t=\Delta t$ for different numbers ($P$) of subdomains, with the  ETD1 (left) or  the ETD2 (right).} \label{fig:numsubM2} \vspace{-0.6cm}
\end{figure}

\begin{figure}[!htbp]
\centering
\begin{minipage}{0.45 \linewidth}
\centering
 \includegraphics[scale=0.24]{./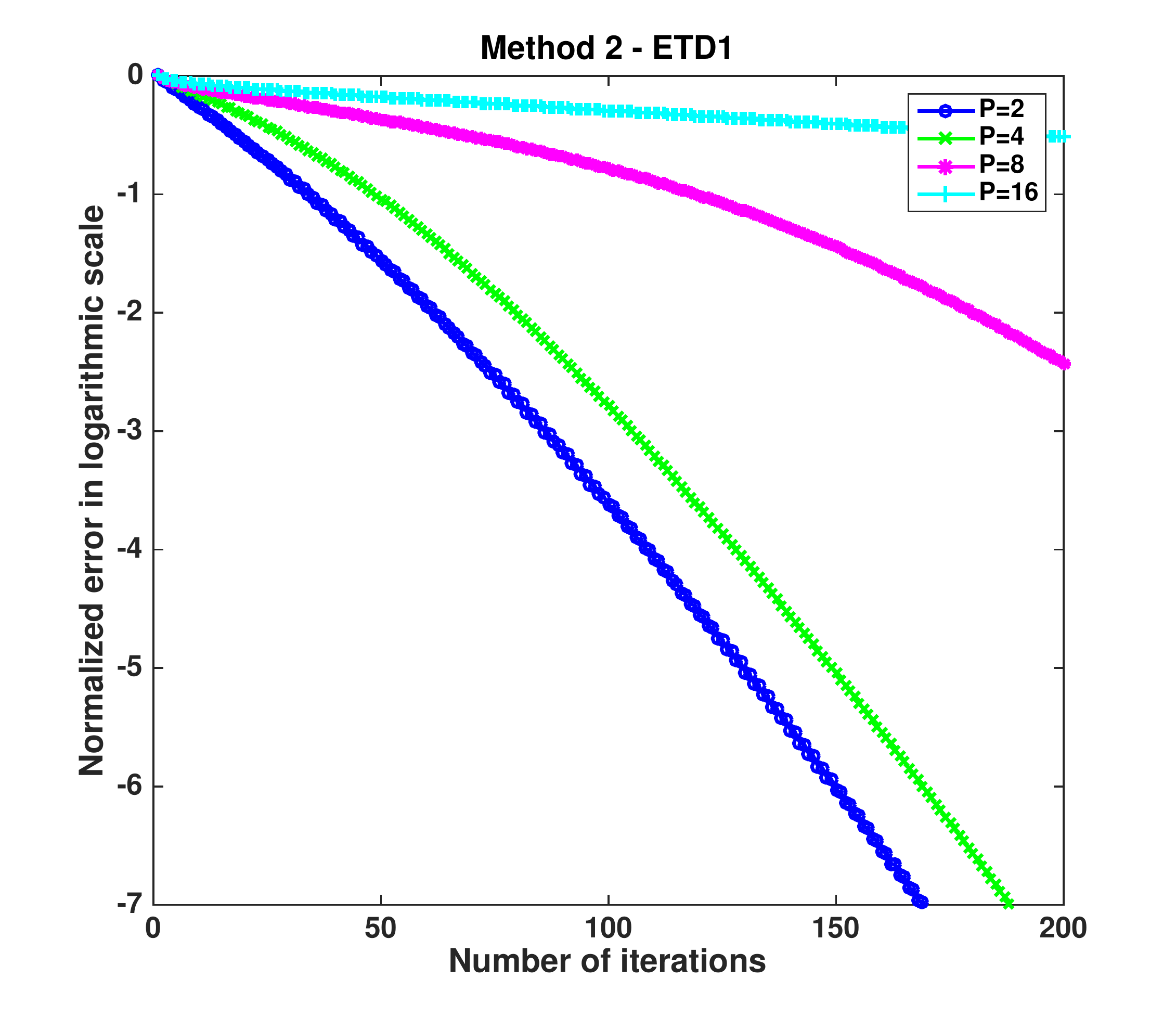}
\end{minipage} \hspace{5pt}
\begin{minipage}{0.45 \linewidth}
\centering
\includegraphics[scale=0.24]{./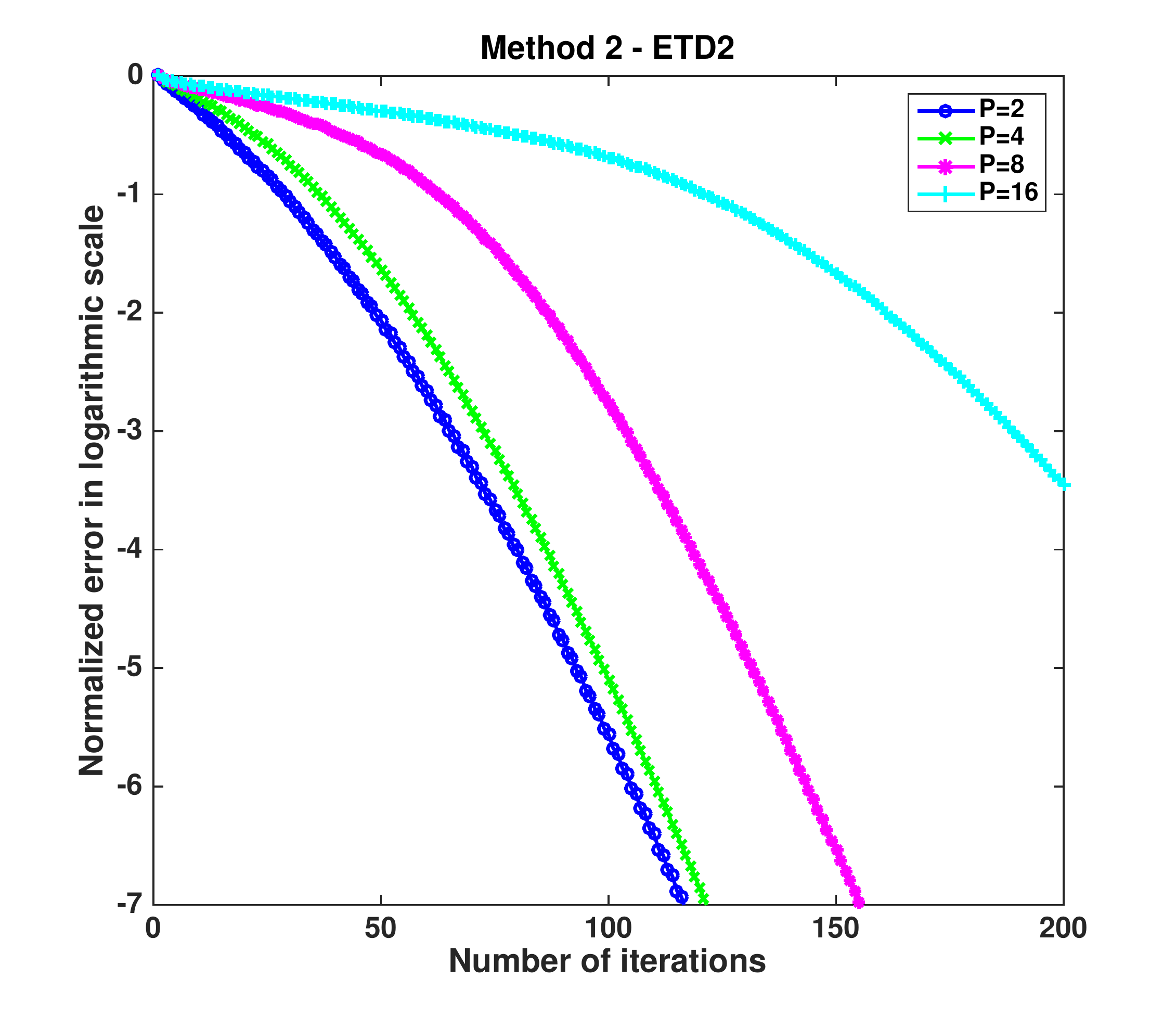}
\end{minipage}
\caption{Decay curves of the normalized $L^{\infty}(0,T, L^{\infty}(\Omega))$ errors of Method 2 over $[0,T]$ for different numbers ($P$) of subdomains, with the  ETD1 (left) or  the ETD2 (right). } \label{fig:numsubM3}
\end{figure}

%
%
\subsection{A 1D example with an analytical solution for testing the accuracy in time of multidomain localized ETD solutions} \label{subsec:anal.test}

Consider the spatial domain $\Omega=[-1,1]$, which is {split}  into two overlapping subdomains $\Omega_{1} = [-1, \delta]$ and $\Omega_{2}=[-\delta, 1]$ for $0 < \delta <1 $. We solve the problem
\begin{equation*}
\begin{array}{rll}\frac{\partial u}{\partial t} &= \frac{\partial^{2} u}{\partial x^{2}} + 2 \pi^{2}  \text{e}^{\pi^{2}t} \sin \left (\pi\big(x-\dfrac{1}{4}\big)\right ), & -1 \leq x \leq 1,\; 0 < t<0.25, 
\end{array} 
\end{equation*}
with the exact solution given by 
$$ u(x,t)= \text{e}^{\pi^{2}t} \sin \left (\pi(x-\frac{1}{4})\right ).
$$
The nonhomogemeous Dirichlet boundary conditions and the initial condition are then determined correspondingly from the exact solution. We fix the mesh size $h = 2/512 \approx 0.0039$, and vary $\Delta t \in \{1/40, 1/80, 1/160, 1/320\}$ and $\delta \in \{ h, 2 h, 4 h, 8 h, 16 h \}$. We would like to verify the temporal accuracy  of the two localized ETD methods. 
For both cases, the converged multidomain solution is defined whenever the relative residual is smaller than a given tolerance $\varepsilon$: $\varepsilon = 10^{-4}$ if the ETD1 is used and $\varepsilon = 10^{-6}$ if the ETD2 is used. 
The relative errors in $L^{\infty}(0,T, L^{\infty}(\Omega))$-norm between the multidomain localized ETD solutions (by \eqref{ETD1Multi} or \eqref{ETD2Multi}) and the exact solution are computed and presented in Tables~\ref{tab:ETD1} and \ref{tab:ETD2} for the localized ETD1  and the localized ETD2  respectively, where the numbers in brackets are the convergence rate of the errors at two successive time step refinement levels. We note that 
once completely converged, the multidomain localized ETD solutions computed by the two iterative domain decomposition algorithms, Method 1 and Method~2, are the same. 

\begin{table}[!htbp]
\centering
\scriptsize
\setlength{\extrarowheight}{3pt}
\begin{tabular}{|c | c | c | c| c| c| }
\hline 
\multicolumn{2}{|c|}{Method}	& \multicolumn{4}{|c|}{Time step size $\Delta t$} \\
\cline{3-6} 
\multicolumn{2}{|c|}{}& $1/40$ & $1/80$ & $1/160$ & $1/320$ \\ 
\hline\hline
\multicolumn{2}{|c|}{Global ETD1} & $1.22E-01$ &	$6.21E-02$ $(0.93)$&	$3.09E-02$ $(0.97)$ &	$1.54E-02$ $(1.00)$ \\
\hline	
 & $\delta =h$ & $3.83E-01$	&$2.46E-01$ $(0.64)$	&$1.60E-01$ $(0.62)$	&$1.04E-01$	$(0.61)$ \\  
\cline{2-6}
\multirow{1}{*}{Localized}& $\delta  =2h$ & $3.73E-01$&	$2.36E-01$ $(0.66)$&	$1.51E-01$ $(0.65)$&	$9.62E-02$ $(0.65)$ \\ 
\cline{2-6}
\multirow{1}{*}{ETD1 }& $\delta  = 4 h$ & $3.53E-01$  &	$2.18E-01$  $(0.70)$ &	$1.34E-01$  $(0.70)$ & $	8.18E-02$ $(0.71)$ \\ 
\cline{2-6}
& $\delta = 8 h$ & $3.17E-01$ &	$1.87E-01$ $(0.77)$&	$1.08E-01$ $(0.79)$&	$6.05E-02$ $(0.84)$ \\ 
\cline{2-6}
& $\delta  = 16 h$ & $2.61E-01$ & 	$1.43E-01$ $(0.86)$ &	$7.62E-02$ $(0.91)$	& $3.93E-02$	$(0.96)$ \\ 
\hline 
\end{tabular}
\caption{Relative errors and convergence rates of the two-subdomain localized ETD1 solutions.} \label{tab:ETD1}  
\end{table}

\begin{table}[!htbp]
\centering
\scriptsize
\setlength{\extrarowheight}{3pt}
\begin{tabular}{|c | c | c | c| c| c| }
\hline 
\multicolumn{2}{|c|}{Method}	& \multicolumn{4}{|c|}{Time step size $\Delta t$} \\
\cline{3-6} 
\multicolumn{2}{|c|}{}& $1/40$ & $1/80$ & $1/160$ & $1/320$ \\ 
\hline\hline
\multicolumn{2}{|c|}{Global ETD2} & $5.17E-03$  & $1.28E-03$ $(2.01)$&	$3.21E-04$ $(2.00)$&	$8.46E-05$ $(1.93)$ 
\\ \hline	
& $\delta =h$ & $1.81E-02$ &	$6.40E-03$ $(1.50)$&	$2.22E-03$ $(1.53)$&	$7.58E-04$ $(1.55)$ 
\\ \cline{2-6} 
\multirow{1}{*}{Localized}& $\delta  =2h$ & $1.74E-02$ &	$6.03E-03$	$(1.53)$ &$2.03E-03$  $(1.57)$&	$6.67E-04$ $(1.61)$ 
\\ \cline{2-6} 
\multirow{1}{*}{ETD2}& $\delta  = 4 h$ & $1.62E-02$ &	$5.37E-03$ $(1.59)$&	$1.71E-03$ $(1.65)$&	$5.21E-04$ $(1.72)$ 
\\ \cline{2-6}
& $\delta = 8 h$ &  $1.41E-02$	& $4.34E-03$ $( 1.70)$&	$1.26E-03$ $(1.79)$&	$3.44E-04$ $(1.97)$ 
\\ \cline{2-6}
& $\delta  = 16 h$ & $1.11E-02$ &	$3.11E-03$ $(1.84)$&	$8.20E-04$ $(1.92)$&	$2.14E-04$ $(1.94)$ 
\\ \hline  
\end{tabular}
\caption{Relative errors and convergence rates of the two-subdomain localized ETD2 solutions.} \label{tab:ETD2} \vspace{-0.7cm}
\end{table}

Note that the errors given by the monodomain (global) ETD method and by the localized ETD  methods are different, which is consistent with the theory as the iterative multidomain solution doesn't converge to the fully discrete monodomain solution, but to the fully discrete multidomain solution (see Section~\ref{sec:Convergence}). Hence, the iterative multidomain solutions corresponding to different  sizes of overlap are not exactly the same. We observe that the orders of the schemes are well preserved if the overlap size is large enough. The errors given by the multidomain solutions are usually larger than those by the monodomain ETD methods, except when sufficiently large overlaps and small time step sizes are used. 
%
%
%
\subsection{A 2D example} \label{subsec:2D}

The spatial domain is $\Omega=[0,\pi]^{2}$, $T=0.5$ and the exact solution is chosen to be  
$$u(x,y,t)=\textstyle\text{e}^{-4t} \sin(x-\frac14) \sin(2(y-\frac18)). $$
The nonhomogemeous Dirichlet boundary conditions and the initial condition are again determined correspondingly from the exact solution.
In space, we use a Cartesian grid with $h = \pi/128$; in time, we use a uniform time step size $\Delta t = T/128$. We consider a decomposition of $\Omega$ into overlapping squares of equal size with a fixed overlap size equal to $9h$. We vary the number of subdomains, and apply Method~1 and Method~2 with the ETD2. 
{The ``converged'' multidomain localized ETD solutions are computed after some fixed number of Schwarz iterations and  compared with the exact solution. Table~\ref{tab:2Dtest} reports the errors between the approximate multidomain solutions and the exact solution in $L^{\infty}(\Omega)$-norm at time $t= T$ under different numbers of 
 subdomains (a total of $P\times P$ subdomains with uniform partition in each direction). The corresponding numbers of iterations are listed in brackets.}
\begin{table}[!htbp]
\centering
\scriptsize
\setlength{\extrarowheight}{3pt}
\begin{tabular}{|c | l | l | l | l | }
\hline 
\# of Subdomains & $1 \times 1$  & $2 \times 2$ & $3 \times 3$ & $4 \times 4$ \\ \hline\hline
Method 1 & \multirow{3}{*}{$2.7910E-03$} & $2.7910E-03$  [2]& $2.7912E-03$ [3]& $2.7906E-03$ [4]\\ \cline{1-1}\cline{1-1}\cline{3-5} \cline{3-5} 
\multirow{2}{*}{Method 2} & & $2.4073E-01$ [2] & $3.4382E-01$ [3]& $3.2665E-01$ [4]\\ \cline{3-5} 
 & &$2.7913E-03$ [14]& $2.7931E-03 $ [19] & $2.7911E-03$ [23] \\ \hline
\end{tabular}
\caption{$L^{\infty}(\Omega)$ errors at time $t= T$ between the approximate multidomain localized ETD solutions (using the ETD2) and the exact solution; the numbers of Schwarz iterations  used are shown in brackets. } \label{tab:2Dtest} \vspace{-0.4cm}
\end{table}

It can be seen that for a sufficiently large size of overlap, Method~1 converges after a few iterations (just $P$, {the numbers of subdomains in one direction}) despite the number of subdomains and reaches the accuracy of  the monodomain ETD solution. However, for Method~2, the convergence is slower. {It takes more iterations to achieve the desired accuracy.} In particular, if the number of iterations is fixed to be $P$, the numerical errors are much larger than the error given by the monodomain ETD solution.  At least for this example with conforming time step sizes, Method~1 seems more efficient than Method~2. 

\section{Conclusions}\label{cons}
In this paper, we have introduced two iterative, localized exponential time differencing methods based on overlapping domain decomposition for the time-dependent diffusion equation: Method 1 with iterations at each time step and Method 2 in which time dependent problems are solved at each iteration.  Convergence analysis is rigorously studied for the one-dimensional (in space) case with discussions of extensions to higher-dimensional problems. Numerical experiments in 1D and 2D spaces confirm that both iterative domain decomposition algorithms  converge linearly  (at each time step or the whole time window) and the convergence rate depends on the size of overlap. For Method 1, the convergence rate is dependent on the time step size as well. For Method 2 with short time windows, it could converge super-linearly. Since Method 2 is global in time, it makes possible the use of different time steps in the subdomains according to their physical properties. To accelerate the convergence, one should use short time intervals (called time windows) and use the solution in the previous time window to calculate a ``good initial'' guess on the space-time interface. 

\bibliographystyle{siam}

\end{document}